\newcommand{\mi}{\mathrm{i}}
\crefname{hypothesis}{Hypothesis}{Hypotheses}
\title{A Power-like Method for Computing the Dominant Eigenpairs of
	Large Scale Real Skew-Symmetric Matrices\thanks{Submitted to the editors DATE.
\funding{The author was supported by the National Natural Science Foundation of China Grant (No. 62372467) and the Science Foundation of China University of Petroleum, Beijing (No. 2462021YJRC025, No. 2462024BJRC012).}}}
\author{Qingqing Zheng\thanks{College of Science, China University of Petroleum, Beijing 102249, China
  (\email{zhengqq@cup.edu.cn}).}
}
\begin{document}

\maketitle

% REQUIRED
\begin{abstract}
The power method is a basic method for computing the dominant eigenpair of a matrix.
In this paper, we propose a structure-preserving power-like method for computing the dominant conjugate pair of purely imaginary
eigenvalues and the corresponding eigenvectors of a large skew-symmetric matrix $S$, which works
on $S$ and its transpose $S^{T}$ alternately and is performed in {\em real} arithmetic.
We establish the rigorous and quantitative convergence of the proposed power-like method, and prove
that the approximations to the dominant eigenvalues converge twice as fast as those to the associated eigenvectors.
Moreover, we develop a deflation technique to compute several complex conjugate dominant eigenpairs of $S$. Numerical experiments show the effectiveness and efficiency of the new method.
\end{abstract}

% REQUIRED
\begin{keywords}
Power method, skew-symmetric power method, skew-symmetric matrix, eigenvalue, eigenvector, convergence
\end{keywords}

% REQUIRED
\begin{MSCcodes}
15A18, 65F15, 15A22
\end{MSCcodes}

\section{Introduction}
Consider the numerical solution of the following eigenproblem
\begin{equation}\label{pro}
	Sx=\lambda x, ~x\neq0,~\lambda\in\mathbb{C},
\end{equation}
where $S\in \mathbb{R}^{n\times n}$ is a real skew-symmetric matrix, i.e.,
$S^{T} = -S$, the superscript $T$ denotes the transpose of a matrix or vector, and
$(\lambda,x)$ is an eigenpair of $S$ with $\|x\|=1$ and $\|\cdot\|$ being the vector 2-norm
and the spectral norm of a matrix.
Problem (\ref{pro}) arises in a variety of applications, such as the solution of linear quadratic optimal control problems \cite{Mehrmann1991TheAL,Tsai2014}, matrix function computations \cite{Cardoso,Buono,BIT}, model reduction \cite{Mencik,Yan}, crack following in anisotropic materials \cite{Thomas}, polynomial eigenvalue problems (PEPs) \cite{Benner,Mackey,Mehrmann}, wave propagation solution for repetitive structures \cite{Mariana,Zhong}, and some others \cite{Greif,Mehrmann2012,Wimmer,PENKE2020102639,Tang}. Since the eigenvalues of a nonsingular real
skew-symmetric $S$ are all purely imaginary and come in conjugate pairs, $n$ must be even, so that
we can write the conjugate eigenvalues as $\lambda_{\pm}=\pm \mi \sigma$ and the corresponding
conjugate eigenvectors as $x_{\pm }$ in \eqref{pro}.

In contrast to the symmetric eigenvalue problem that has been studied intensively for several
decades \cite{Golub,Parlett,Stewart,Wilkinson}, the skew-symmetric eigenvalue problem has received very limited attention.
For small to medium sized skew-symmetric matrices, there are several numerical algorithms
available to compute the spectral decomposition of $S$ in real
arithmetic \cite{Paardekooper,PENKE2020102639,Ward}. For $S$ large scale,
the Matlab internal function {\sf eigs} \cite{Lehoucq} is based on the Arnoldi method
that generates a sequence of small upper Hessenberg matrices. In exact arithmetic, it is easily
justified that these small matrices reduce to the skew-symmetric form. However, in finite
precision, it is not the case, and the small matrices are of Hessenberg form
rather than skew-symmetric structure
due to round-off errors, causing their eigenvalues {\em never} to be purely imaginary and have
small but nonzero real parts and the computed eigenvectors lose their unique property to be described later. As a consequence, {\sf eigs} does not preserve the spectrum structure of $S$ and is not favorable.
Huang and Jia \cite{Huang} propose a skew-symmetric Lanczos bidiagonalization (SSLBD) method to compute
several extremal eigenpairs
based on the equivalence between the spectral decomposition of $S$  and a specific
structured singular value decomposition (SVD) of $S$, and develop an implicitly restarted SSLBD algorithm.
The algorithm is performed in real arithmetic and preserves the structures of
eigenvalues and eigenvectors of $S$, but it is quite involved and complicated.

The power method \cite{Golub,Saad,Stewart,Wilkinson} is a very old and simple
method for computing the dominant eigenpair of a general matrix $A$. Given
an initial vector $q_0$, this method generates the sequence of vectors $A^{k}q_{0}$ that can converge to
a dominant eigenvector in direction under the mild requirement that $q_{0}$ is not
deficient in the dominant eigenvector. Algorithm \ref{ClassicalPower} sketches the power method.
\begin{algorithm}
	\caption{The power method for a general matrix $A$}
	\label{ClassicalPower}
	%\begin{algorithmic}
	\begin{enumerate}
		\item Choose a unit length initial vector $q_0$, and set $k=0$;
		\item {\bf While} not converged
		\item \quad\quad$q_{k+1}=Aq_{k}$;
		\item \quad\quad$\rho_{k}=q_{k}^{T}q_{k+1}$;
		\item \quad\quad$k=k+1$;
		\item \quad\quad$\mu_{k}=\dfrac{1}{\|q_{k}\|};$
		\item \quad\quad$q_{k}=\mu_{k}q_{k}$;
		\item {\bf End~while}.
	\end{enumerate}
	%\end{algorithmic}
\end{algorithm}
How to choose $q_0$ starting vector and design a convergence criterion is described in detail in \cite{Stewart}.
An advantage of the power method is that it requires only matrix-vector products.
However, the basic power method cannot converge for a real matrix with a pair of
complex conjugate dominant eigenvalues \cite{Wilkinson}. Therefore, it is not applicable to
the skew-symmetric matrix $S$. In order to fix this deficiency, Wilkinson \cite[p.579-81]{Wilkinson}
proposes a variant of the power method to compute complex dominant eigenpairs in real arithmetic, but he
neither gives necessary algorithmic details nor establishes quantitative convergence results
and designs a reliable stopping criterion.

To compute the dominant complex conjugate eigenpairs of $S$ in real arithmetic, we propose a
skew-symmetric power-like (SSP) method in this paper, which iterates with $S$ and $S^T=-S$ alternately.
We establish quantitative convergence results on the proposed method.
Based on it, we design a reliable and general-purpose stopping criterion.
In order to compute more than one complex conjugate dominant eigenpairs, we introduce a
structure-preserving deflation approach into the SSP method. Finally, we perform numerical experiments
on a number of test problems and illustrate the effectiveness of the SSP method with deflation.

The paper is organized as follows.
In Section~\ref{pre} we review some preliminaries. In Section~\ref{sec2} we propose
our SSP method, establish convergence results, and consider practical implementations.
In Section \ref{deflation}, we describe a structure-preserving deflation approach and apply
it to the SSP method.
Section \ref{secExam} reports numerical experiments. In Section \ref{con}, we conclude the paper.

\section{Preliminaries}\label{pre}

For a skew-symmetric matrix $S\in \mathbb{R}^{n\times n}$, there are the following properties,
which can be found in \cite{Huang}.

\begin{proposition}\label{propS} It holds that
	\begin{enumerate}
		\item $S$ is diagonalizable by a unitary similarity transformation;
		
		\item the eigenvalues $\lambda$ of $S$ are either purely imaginary or zero;
		
		\item the real and imaginary parts $u,v$ of the unit length
		eigenvectors $x_{\pm}=\frac{\sqrt{2}}{2}(u\pm\mi v)$ corresponding to
		purely imaginary eigenvalues $\pm\mi \sigma$ of $S$ are mutually orthogonal and
		have unit lengths: $\|u\|=\|v\|=1$.
		
		%\item the eigenvectors of $S$ are orthogonal to each other;
	\end{enumerate}		
\end{proposition}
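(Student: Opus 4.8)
The plan is to treat $S$ as a complex matrix and exploit the single structural fact that a real skew-symmetric matrix is skew-Hermitian, hence normal. First I would verify normality directly: since $S$ is real, $S^{*}=S^{T}=-S$, so $SS^{*}=-S^{2}=S^{*}S$. The spectral theorem for normal matrices then yields a unitary $U$ with $U^{*}SU$ diagonal, which gives claim~1 immediately and also furnishes the tool I need later, namely that eigenvectors belonging to distinct eigenvalues are orthogonal in the Hermitian inner product.

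For claim~2, I would take any eigenpair $(\lambda,x)$ with $\|x\|=1$ and form the Rayleigh quotient $\lambda=x^{*}Sx$. Conjugating and using $S^{*}=-S$ gives $\bar\lambda=x^{*}S^{*}x=-x^{*}Sx=-\lambda$, so $\operatorname{Re}\lambda=0$ and $\lambda$ is purely imaginary or zero.

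Claim~3 is where the real work lies. Fix a nonzero $\sigma$ and a unit eigenvector $x_{+}=\frac{\sqrt2}{2}(u+\mi v)$ with $Sx_{+}=\mi\sigma x_{+}$. Since $S$ is real, conjugation shows $x_{-}:=\bar{x}_{+}=\frac{\sqrt2}{2}(u-\mi v)$ satisfies $Sx_{-}=-\mi\sigma x_{-}$, so $x_{-}$ is the eigenvector for $-\mi\sigma$. I would then extract two scalar identities. The normalization $x_{+}^{*}x_{+}=1$ expands, using that the real vectors obey $u^{T}v=v^{T}u$ so the imaginary part cancels, to $\|u\|^{2}+\|v\|^{2}=2$. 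Separately, because $\mi\sigma\neq-\mi\sigma$ when $\sigma\neq0$, normality forces $x_{-}^{*}x_{+}=0$; since $x_{-}^{*}=x_{+}^{T}$, this is the bilinear condition $x_{+}^{T}x_{+}=0$, which expands to $\frac{1}{2}\bigl(\|u\|^{2}-\|v\|^{2}+2\mi\,u^{T}v\bigr)=0$.

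Splitting this last identity into real and imaginary parts gives $\|u\|^{2}=\|v\|^{2}$ and $u^{T}v=0$; combining $\|u\|^{2}=\|v\|^{2}$ with $\|u\|^{2}+\|v\|^{2}=2$ yields $\|u\|=\|v\|=1$, completing claim~3. The one point that requires care, and the step I expect to be the main obstacle, is recognizing that the decisive constraint comes from the \emph{bilinear} product $x_{+}^{T}x_{+}$ (equivalently, the Hermitian orthogonality of $x_{+}$ and its conjugate $x_{-}$), not from the Hermitian norm $x_{+}^{*}x_{+}$ alone: the norm condition pins down only $\|u\|^{2}+\|v\|^{2}$, and without invoking the orthogonality of the $\pm\mi\sigma$ eigenvectors one cannot separately conclude either $\|u\|=\|v\|$ or $u\perp v$.
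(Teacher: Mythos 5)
Your proof is correct and complete: normality of $S$ gives unitary diagonalizability and the orthogonality of the $\pm\mi\sigma$ eigenspaces, the Rayleigh quotient gives pure imaginarity, and the key step — extracting $\|u\|=\|v\|$ and $u^{T}v=0$ from the bilinear identity $x_{+}^{T}x_{+}=x_{-}^{*}x_{+}=0$ rather than from the Hermitian norm alone — is exactly the right observation. The paper itself does not prove this proposition but cites \cite{Huang} for it; your argument is the standard self-contained derivation of those facts and there is nothing to object to.
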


We can prove the following result, which plays a crucial role in proposing the later algorithms,
establishing their convergence and designing reliable stopping criteria.

\begin{theorem}\label{thm1}
	If $\frac{\sqrt{2}}{2}(u\pm\mi v)$ are the unit length eigenvectors of $S$ associated with the eigenvalues $\pm\mi\sigma$ and $w,z$ are an orthonormal basis
	of ${\rm span}\{u,v\}$, then $\frac{\sqrt{2}}{2}(w\pm\mi z)$ are also the
	unit-length eigenvectors of $S$ associated with $\pm\mi\sigma$.
\end{theorem}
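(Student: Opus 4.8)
The plan is to reduce everything to the way $S$ acts on the two-dimensional real subspace ${\rm span}\{u,v\}$, which by Proposition~\ref{propS} is spanned by an orthonormal pair. First I would extract the real form of the eigenrelation. Writing out $S\,\frac{\sqrt2}{2}(u+\mi v)=\mi\sigma\,\frac{\sqrt2}{2}(u+\mi v)$ and separating real and imaginary parts (legitimate because $S$, $u$, $v$ are all real), I obtain the two real identities $Su=-\sigma v$ and $Sv=\sigma u$; the conjugate relation for $u-\mi v$ yields nothing new. These identities say precisely that $S$ maps the plane ${\rm span}\{u,v\}$ into itself and acts there as $\sigma$ times a quarter-turn.

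Next I would parametrize the new basis. Since $\{u,v\}$ and $\{w,z\}$ are both orthonormal bases of the same plane, there is an orthogonal $2\times2$ matrix $Q$ with $[\,w\ z\,]=[\,u\ v\,]Q$; equivalently $w=c_1u+c_2v$ and $z=d_1u+d_2v$ with $c_1^2+c_2^2=d_1^2+d_2^2=1$ and $c_1d_1+c_2d_2=0$. Applying $S$ and using $Su=-\sigma v$, $Sv=\sigma u$ gives $Sw=\sigma(c_2u-c_1v)$ and $Sz=\sigma(d_2u-d_1v)$. Substituting the admissible orientation $d_1=-c_2,\ d_2=c_1$ and re-expressing the results in the $\{w,z\}$ basis, I would verify the pair of real identities $Sw=-\sigma z$ and $Sz=\sigma w$, from which $S(w\pm\mi z)=\pm\mi\sigma(w\pm\mi z)$ follows by a one-line expansion.

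The unit-length claim is then routine: since $w,z$ are real and orthonormal, $\|w\pm\mi z\|^2=\|w\|^2+\|z\|^2=2$, the cross terms $\pm\mi(w^Tz-z^Tw)$ vanishing because $w^Tz=z^Tw$; hence $\frac{\sqrt2}{2}(w\pm\mi z)$ has norm one.

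The one point that needs care—and the only real obstacle—is orientation. The coefficient matching closes cleanly only when $Q$ is a rotation ($\det Q=+1$); if $Q$ is a reflection, the very same computation shows $Sw=\sigma z$ and $Sz=-\sigma w$, so that $w+\mi z$ is the eigenvector for $-\mi\sigma$ rather than $+\mi\sigma$ and the two signs are interchanged. I would dispose of this by observing that replacing $z$ by $-z$ (which keeps $\{w,z\}$ orthonormal and merely relabels the conjugate pair) restores the stated $\pm$ correspondence, so without loss of generality the basis may be taken positively oriented relative to $\{u,v\}$.
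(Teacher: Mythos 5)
Your proposal is correct and follows essentially the same route as the paper: derive $Su=-\sigma v$, $Sv=\sigma u$ from the real/imaginary split of the eigenrelation, express $(w,z)$ as $(u,v)$ times an orthogonal $2\times 2$ matrix, and treat the rotation and reflection cases separately. Your handling of the orientation issue (flip $z\mapsto -z$, i.e.\ relabel the conjugate pair) is equivalent to the paper's closing observation that the eigenvectors for $\pm\mi\sigma$ come in conjugate pairs, so the sign swap in the reflection case is harmless.
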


\begin{proof}
Since $w,z$ and $u,v$ form two orthonormal bases of span$\{u,v\}$, respectively,
we must have
\begin{equation}\label{WZ1}
	(w,z)=(u,v)\begin{pmatrix}
		\cos\theta & -\sin\theta \\
		\sin\theta & \cos\theta\\
	\end{pmatrix}.
\end{equation}
or
\begin{equation}\label{WZ2}
	(w,z)=(u,v)\begin{pmatrix}
		\cos\theta & \sin\theta \\
		-\sin\theta & \cos\theta\\
	\end{pmatrix}.
\end{equation}
Suppose $\frac{\sqrt{2}}{2}(u+\mi v)$ is the eigenvector of $S$ corresponding to
the eigenvalue $\mi\sigma$. It is straightforward from $S(u+{\rm i}v)={\rm i}\sigma (u+{\rm i}v)$ that
$Su=-\sigma v, Sv=\sigma u$. If (\ref{WZ1}) holds, then $Sw=-\sigma z,Sz=\sigma w$, from which it follows that $S(w+\text{i}z)=-\sigma z+\text{i}\sigma w=\text{i}\sigma(w+\text{i}z)$, i.e.,
$(\text{i}\sigma, w+\text{i}z)$ is an eigenpair of $S$. If $w$ and $z$ satisfy (\ref{WZ2}), we obtain $S(w+\text{i}z)=-\text{i}\sigma(w+\text{i}z)$, i.e., $(-\text{i}\sigma,w+\text{i}z)$ is an eigenpair of $S$.
Since the eigenvectors of $S$ with the eigenvalues $\pm\mi\sigma$ are conjugate, $\frac{\sqrt{2}}{2}(w\pm\mi z)$ are also the
unit-length eigenvectors of $S$ associated with $\pm\mi\sigma$.
\end{proof}

\begin{rem}\label{rem1}
	Item 3 of Proposition~\ref{propS} and Theorem~\ref{thm1}
	show that we can write the eigenvectors $x_{\pm}=\frac{\sqrt{2}}{2}(w\pm \mi z)$
	of $S$ associated with the complex conjugate eigenvalues $\pm\mi\sigma$, provided that $w$ and $z$
	are an orthonormal basis of the eigenspace span$\{u,v\}$ of $S$
associated with  $\pm\mi\sigma$.

\end{rem}

\begin{rem}
Since $S$ with $n$ odd must have a zero eigenvalue, for brevity and ease of presentation
	we will always assume that $n$ is even in the sequel.
	Furthermore, we assume that $S$ is nonsingular so that all the eigenvalues are purely imaginary and thus come in pairs. A singular $S$ with $n$ even or odd has no effect on our algorithm to be proposed
	and the convergence results and analysis. We refer the reader to \cite{Huang} for more results and details
	on a singular $S$.
\end{rem}

For $n$ even, let $m=n/2$, and label the eigenvalues
$\lambda_{\pm j}=\pm\mi \sigma_j$ of $S$ as
\begin{equation}
	\sigma_1>\sigma_2\geq\cdots\geq \sigma_m.
\end{equation}
From $S(u_j\pm\mi v_j)=\pm\mi \sigma_j (u_j\pm\mi v_j)$
and $S^T=-S$, we obtain
\begin{equation}\label{AltEquation}
	Sv_{j}=\sigma_{j}u_{j},\ S^{T}u_{j}=\sigma_{j}v_{j} \ \ \mbox{and}\ \ \sigma_j=u_j^TSv_j,
\end{equation}
which means that $(\sigma_j,u_j,v_j)$ and $(\sigma_j,v_j,-u_j)$ are two singular triplets of $S$.
From the above, we have
\begin{equation}\label{eigSTS}
	S^{T}Su_{j}=\sigma_{j}^{2}u_{j}\ \ \mbox{and}\ \ S^{T}Sv_{j}=\sigma_{j}^{2}v_{j}.
\end{equation}
Particularly,  $\pm\text{i}\sigma_{1}$ are the
complex conjugate dominant eigenvalues of $S$, and the corresponding eigenvectors are
$x_{\pm 1}=\frac{\sqrt{2}}{2}(u_{1}\pm\text{i}v_{1})$. Then \eqref{eigSTS} indicates
that $\sigma_{1}^{2}$ is the dominant eigenvalue of $S^{T}S$ with
multiplicity two and $u_{1},v_{1}$ are the corresponding eigenvectors.

\section{A skew-symmetric power method by applying $S$ and $S^{T}$ alternately}\label{sec2}

Note that the complex conjugate eigenpairs
$(\text{i}\sigma_{1}, u_{1}+\text{i}v_{1})$ and $(-\text{i}\sigma_{1}, u_{1}-\text{i}v_{1})$ are
the dominant eigenpairs of $S$. Algorithm \ref{ClassicalPower} itself
cannot compute them. Next, we propose a variant of Algorithm \ref{ClassicalPower}
to carry out this task in real arithmetic.
Relation (\ref{AltEquation}) shows that the real and imaginary parts $u_{j}$ and $v_{j}$ of the eigenvector $x_j$
associated with the eigenvalue $\text{i}\sigma_{j}$ are parallel to $Sv_{j}$ and $S^{T}u_{j}$, respectively.
For the dominant eigenpairs $(\pm\mi\sigma_1,\frac{\sqrt{2}}{2}(u_1\pm\mi v_1)$, inspired by  (\ref{AltEquation}),
we can naturally design an iteration scheme: Given an initial unit length vector $q_0\in\mathbb{R}^n$,
construct unit length iterates $q_{k+1}=Sq_k/\|Sq_k\|$
and $q_{k+2}=S^Tq_{k+1}/\|S^Tq_{k+1}\|=-S q_{k+1}/\|Sq_{k+1}\|,\ k=0,1,\ldots$,
it is expected that $q_{k+1},q_{k+2}$ and $q_{k+1}^TSq_{k+2}$ converge to $u_1,v_1$
and $\sigma_1$, respectively. This is equivalent to applying
Algorithm \ref{ClassicalPower} to $S$ and $S^{T}$ alternately, as is described in Algorithm \ref{alg-powerSTS},
named the skew-symmetric power (SSP) method,
where each iteration uses two matrix-vector products with $S$.
%and line 2 regards $(\rho_j,q_{k+2},q_{k+1})$ as an approximate singular triplet
%of $S$ and tests the convergence.

\begin{algorithm}
	\caption{The SSP method by applying $S$ and $S^{T}$ alternately}
	\label{alg-powerSTS}
	%\begin{algorithmic}
	\begin{enumerate}
		\item \quad Choose an unit length initial vector $q_0\in \mathbb{R}^n$, and set $k=0$;
		\item \quad {\bf While} not converged %$\|Sq_{k+2}-\rho_{j} q_{k+1}\|+\|Sq_{k+1}+\rho_{j} q_{k+2}\|\geq tol$
		\item \quad$q_{2k+1}=Sq_{2k}$; $q_{2k+1}=q_{2k+1}/\|q_{2k+1}\|$;
		\item \quad$q_{2k+2}=-Sq_{2k+1};$ $q_{2k+2}=q_{2k+2}/\|q_{2k+2}\|$;
		\item \quad$\rho_{k}=q_{2k+1}^{T}(Sq_{2k+2})$;
		
		\item \quad $k=k+1$
		%			\item \quad\quad\quad Perform steps 2-4;
		\item \quad {\bf End while}
	\end{enumerate}
	%\end{algorithmic}
\end{algorithm}

We will prove the afore-anticipated convergence of Algorithm \ref{alg-powerSTS} and
establish a number of quantitative convergence results. Afterwards, we come back
to line 2 of Algorithm~\ref{alg-powerSTS} and design a general and reliable stopping criterion.
To this end, we need the following two lemmas.

\begin{lemma}\label{Orthpro} The sequences $\{q_{2k}\}_{k=1}^{\infty}$ and $\{q_{2k-1}\}_{k=1}^{\infty}$
	obtained by \Cref{alg-powerSTS} satisfy
	\begin{equation}\label{OrthOddEven}
		q_{2k}^{T}q_{2k-1}=0, k=1,2,\ldots,\infty,
	\end{equation}
	i.e., the two consecutive iterates of $\{q_{k}\}_{k=1}^{\infty}$ are orthogonal.
\end{lemma}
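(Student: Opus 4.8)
The plan is to reduce the orthogonality of consecutive iterates to a single algebraic identity: the quadratic form of a skew-symmetric matrix vanishes on every real vector. Concretely, for any $y\in\mathbb{R}^n$ the scalar $y^{T}Sy$ equals its own transpose, so $y^{T}Sy=y^{T}S^{T}y=-y^{T}Sy$, which forces $y^{T}Sy=0$. This is the only property of $S$ that I will need.

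Next I would trace the indices in \Cref{alg-powerSTS} to see exactly how $q_{2k}$ is produced from $q_{2k-1}$. By line 4 of the algorithm, with the loop index shifted by one, the even iterate is $q_{2k}=-Sq_{2k-1}/\|Sq_{2k-1}\|$; that is, $q_{2k}$ is a nonzero scalar multiple of $Sq_{2k-1}$. Since the normalization factor plays no role in whether an inner product vanishes, it suffices to examine the (unnormalized) quantity $(Sq_{2k-1})^{T}q_{2k-1}$.

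Then I would simply compute
\begin{equation*}
q_{2k}^{T}q_{2k-1}=\frac{-1}{\|Sq_{2k-1}\|}\,(Sq_{2k-1})^{T}q_{2k-1}=\frac{-1}{\|Sq_{2k-1}\|}\,q_{2k-1}^{T}S^{T}q_{2k-1}=\frac{1}{\|Sq_{2k-1}\|}\,q_{2k-1}^{T}Sq_{2k-1},
\end{equation*}
which vanishes by the identity of the first paragraph. This establishes \eqref{OrthOddEven} for every $k\geq1$.

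I do not expect a genuine obstacle here: the result is an immediate consequence of skew-symmetry, and the same one-line argument shows that the other consecutive pair $q_{2k+1}^{T}q_{2k}$ vanishes as well, since $q_{2k+1}\propto Sq_{2k}$ by line 3; hence in fact every two consecutive iterates are mutually orthogonal. The only point demanding care is the bookkeeping: matching the subscripts $2k$ and $2k-1$ to the loop variable in \Cref{alg-powerSTS} and confirming that each $q_{j}$ is indeed proportional to $\pm Sq_{j-1}$ prior to normalization.
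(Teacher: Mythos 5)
Your proof is correct, and it takes a genuinely more local route than the paper's. You use only the single-step relation $q_{2k}=-Sq_{2k-1}/\|Sq_{2k-1}\|$ together with the fact that $y^{T}Sy=0$ for every real $y$ when $S$ is skew-symmetric; your index bookkeeping against lines 3--4 of \Cref{alg-powerSTS} is accurate. The paper instead first establishes, by induction, the global representations $q_{2k}=c_{e}(S^{T}S)^{k}q_{0}$ and $q_{2k-1}=c_{o}S(S^{T}S)^{k-1}q_{0}$ (its equation \eqref{OddEvenk}), and then applies the same ``a scalar equal to its own negative transpose vanishes'' argument to the matrix $(S^{T}S)^{k}S(S^{T}S)^{k-1}$, which is skew-symmetric. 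The underlying algebraic identity is identical in both arguments, so the two proofs are close in spirit; what your version buys is brevity and the stronger conclusion, which you correctly note, that \emph{every} pair of consecutive iterates $q_{j+1},q_{j}$ is orthogonal (not only the even--odd pairs stated in the lemma). What the paper's version buys is the representation \eqref{OddEvenk} itself, which is not incidental: it is reused immediately in the proof of \Cref{ConvergenceSTS} to identify $\{q_{2k}\}$ and $\{q_{2k-1}\}$ with power iterations on $S^{T}S$ and $SS^{T}$ started from $q_{0}$ and $Sq_{0}$, respectively. So if you adopted your shorter proof in the paper, the derivation of \eqref{OddEvenk} would still have to appear elsewhere.
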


\begin{proof} From Algorithm \ref{alg-powerSTS} and $S^T=-S$, for any nonnegative integers $k$ and $\ell\geq 1$ we
inductively obtain
\begin{equation}\label{OddEvenk1}
	q_{k+2\ell}=c_{e}(S^{T}S)^{\ell}q_{k} \ \ \mbox{and}\ \ q_{k+2\ell-1}=c_{o}S(S^{T}S)^{\ell-1}q_{k},
\end{equation}
where $c_{e}$ and $c_{o}$ are two normalizing factors. Particularly, we have
\begin{equation*}
	q_{2\ell}=c_{e}(S^{T}S)^{\ell}q_0 \ \ \mbox{and}\ \  q_{2\ell-1}=c_{o}S(S^{T}S)^{\ell-1}q_0, \ell=1,2,\ldots,
\end{equation*}
i.e.,
\begin{equation}\label{OddEvenk}
	q_{2k}=c_{e}(S^{T}S)^{k}q_0\ \ \mbox{and}\ \ q_{2k-1}=c_{o}S(S^{T}S)^{k-1}q_0, k=1,2,\ldots.
\end{equation}
Therefore,
\begin{eqnarray*}
	% \nonumber to remove numbering (before each equation)
	q_{2k}^{T}q_{2k-1} &=&c_{e}c_{o}q_0^{T}(S^{T}S)^{k}S(S^{T}S)^{k-1}q_0, \\
	(q_{2k}^{T}q_{2k-1})^{T}&=& -c_{e}c_{o}q_0^{T}(S^{T}S)^{k}S(S^{T}S)^{k-1}q_0,
\end{eqnarray*}
meaning that $q_{2k}^{T}q_{2k-1}=0$ for $k=1,2,\ldots$.
\end{proof}

Now let us investigate the convergence of Algorithm \ref{ClassicalPower}. Assume that a matrix $A$ is diagonalizable
and its eigenvalues satisfy
\begin{equation}\label{Lam}
	\lambda_{1}=\lambda_{2}=\cdots=\lambda_{r}, |\lambda_{1}|>|\lambda_{r+1}|\geq\cdots\geq|\lambda_{n}|.
\end{equation}
The initial vector $q_0$ in Algorithm \ref{ClassicalPower} can be expressed as a linear
combination of the eigenvectors $x_i$ of $A$:
\begin{equation}\label{Xoexpress}
	q_0=\sum_{i=1}^{n}\beta_{i}x_{i}.
\end{equation}
So apart from a normalizing factor, $q_{k}$ in Algorithm \ref{ClassicalPower} is given by
\begin{equation}\label{Akx0} A^{k}q_0=\lambda_{1}^{k}\bigg(\sum_{i=1}^{r}\beta_{i}x_{i}+\sum_{i=r+1}^{n}\beta_{i}\left(\dfrac{\lambda_{i}}
	{\lambda_{1}}\right)^{k}x_{i}\bigg).
\end{equation}
Then provided that the initial vector $q_0$ is not deficient in the invariant subspace
${\rm span}\{x_1,\ldots,x_r\}$ of $A$ associated with the $r$ multiple dominant eigenvalues
$\lambda_{1}$, the sequence $q_k$ generated by Algorithm \ref{ClassicalPower}
converges to $\sum_{i=1}^{r}\beta_{i}x_{i}$
in direction, written as $x_*$ after normalization.
A convergence analysis on the power method can be found
in \cite{Saad,Stewart,Wilkinson}, and the convergence rate is $|\lambda_{r+1}/\lambda_1|$.
Furthermore, if $A$ is a normal matrix, these convergence results can be accurately quantified,
as shown below.

\begin{lemma}\label{converpower} Assume that $A$ is a normal matrix and its eigenvalues satisfy \eqref{Lam}.
	Then $q_{k}$ obtained by Algorithm \ref{ClassicalPower} satisfies
	\begin{equation}\label{Tan}
		|\tan\angle(q_{k},x_{*})|\leq\bigg|\frac{\lambda_{r+1}}{\lambda_{1}}\bigg|^{k}|\tan\angle(q_{0},x_{*})|,
	\end{equation}
	where $x_{*}\in\text{span}\{x_{1},x_{2},\ldots,x_{r}\}$ is the normalization of
	$\sum_{i=1}^{r}\beta_{i}x_{i}$ in \eqref{Akx0}, and
	\begin{equation}\label{Conrhok}
		\big|\rho_{k}-\lambda_{1}\big|\leq 2|\lambda_1|\tan^2\angle(q_{k},x_{*}).
	\end{equation}
\end{lemma}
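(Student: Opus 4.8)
The plan is to lean entirely on normality of $A$, which furnishes an orthonormal eigenbasis $x_1,\dots,x_n$ and, decisively, makes the dominant eigenspace $\mathrm{span}\{x_1,\dots,x_r\}$ orthogonal to the subdominant eigenspace $\mathrm{span}\{x_{r+1},\dots,x_n\}$. Writing $q_0=\sum_{i=1}^n\beta_i x_i$ and splitting \eqref{Akx0} as $A^kq_0=\lambda_1^k(p+s_k)$ with $p=\sum_{i=1}^r\beta_i x_i$ and $s_k=\sum_{i=r+1}^n\beta_i(\lambda_i/\lambda_1)^k x_i$, the two pieces are orthogonal and $x_*=p/\|p\|$. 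Since scaling does not alter angles, I would read off the angle directly from this orthogonal decomposition: projecting $p+s_k$ onto $x_*$ recovers exactly $p$ (because $s_k\perp x_*$), so the component along $x_*$ has length $\|p\|$ while the orthogonal residual is precisely $s_k$. This gives the clean identity $\tan\angle(q_k,x_*)=\|s_k\|/\|p\|$, and in particular $\tan\angle(q_0,x_*)=\|s_0\|/\|p\|$ with $s_0=\sum_{i>r}\beta_i x_i$.

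For the tangent bound \eqref{Tan}, I would invoke orthonormality once more, via the Pythagorean identity $\|s_k\|^2=\sum_{i>r}|\beta_i|^2|\lambda_i/\lambda_1|^{2k}$, and then use $|\lambda_i/\lambda_1|\le|\lambda_{r+1}/\lambda_1|$ for every $i\ge r+1$ to conclude $\|s_k\|\le|\lambda_{r+1}/\lambda_1|^{k}\|s_0\|$. Dividing by $\|p\|$ yields \eqref{Tan} immediately.

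For the eigenvalue estimate \eqref{Conrhok}, I would write the unit iterate as $q_k=\cos\phi_k\,x_*+\sin\phi_k\,y_k$, where $\phi_k=\angle(q_k,x_*)$ and $y_k=s_k/\|s_k\|$ is a unit vector lying in the subdominant eigenspace, and observe that $\rho_k=q_k^{T}(Aq_k)$ is the Rayleigh quotient of $q_k$. Expanding this quadratic form, both cross terms carry the factor $x_*^{T}Ay_k$, which vanishes: normality makes $x_*$ a common eigenvector of $A$ and $A^{*}$, so $x_*^{T}Ay_k=\lambda_1\,x_*^{T}y_k=0$ by orthogonality of the eigenspaces. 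The Rayleigh quotient therefore collapses to $\rho_k=\lambda_1\cos^2\phi_k+(y_k^{T}Ay_k)\sin^2\phi_k$, whence $\rho_k-\lambda_1=(y_k^{T}Ay_k-\lambda_1)\sin^2\phi_k$.

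It remains only to bound the scalar $y_k^{T}Ay_k-\lambda_1$. Since $A$ is normal and $y_k\in\mathrm{span}\{x_{r+1},\dots,x_n\}$, its Rayleigh quotient lies in the convex hull of $\lambda_{r+1},\dots,\lambda_n$, so $|y_k^{T}Ay_k|\le|\lambda_{r+1}|\le|\lambda_1|$ and hence $|y_k^{T}Ay_k-\lambda_1|\le 2|\lambda_1|$; combining with $\sin^2\phi_k\le\tan^2\phi_k$ delivers \eqref{Conrhok}, and the squared angle is exactly what makes the eigenvalue error decay at twice the eigenvector rate. I expect no serious obstacle, only two delicate points that both hinge on normality rather than mere diagonalizability: the vanishing of $x_*^{T}Ay_k$ and the field-of-values estimate for $y_k^{T}Ay_k$. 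For a non-normal $A$ the eigenspaces need not be orthogonal, that cross term would survive, and one would be left with a first-order error in $\tan\phi_k$ instead of the quadratic one. A minor nuisance is the plain transpose in $\rho_k=q_k^{T}Aq_k$ versus the conjugate transpose natural for complex normal matrices, but in the intended application $A=S^{T}S$ is real symmetric with real eigenvectors, so the two coincide and the argument goes through verbatim.
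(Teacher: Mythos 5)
Your proposal is correct and follows essentially the same route as the paper: an orthogonal decomposition of $q_0$ (and $q_k$) into its dominant-eigenspace component $x_*$ plus a residual in $\mathrm{span}\{x_{r+1},\dots,x_n\}$, the bound $\|A^k z\|\le|\lambda_{r+1}|^k\|z\|$ on that invariant subspace to get \eqref{Tan}, and the Rayleigh-quotient expansion with vanishing cross terms (via normality) to get \eqref{Conrhok}. The only differences are cosmetic — you work in explicit eigenbasis coordinates with a Pythagorean sum where the paper writes $\epsilon_k=\sin\angle(q_k,x_*)$ and a unit residual vector — and your closing remarks on where normality is genuinely used and on the $T$ versus $H$ issue are accurate observations the paper leaves implicit.
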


\begin{proof} Denote $\epsilon_{0}=\sin\angle(q_{0},x_{*})$. From (\ref{Xoexpress}),
$q_{0}$ can be decomposed as the orthogonal direct sum
\begin{equation}\label{Rex0}
	q_{0}=\sqrt{1-\epsilon_{0}^{2}}x_{*}+\epsilon_{0}z_{0},
\end{equation}
where $x_{*}\in\text{span}\{x_{1},x_{2},\ldots,x_{r}\}$ and the unit length $z_{0}\in\text{span}\{x_{r+1},\ldots,x_{n}\}$ satisfies $x_{*}^{H}z_{0}=0$ with the superscript $H$ being the conjugate transpose of a vector.
Therefore,
\begin{equation*}
	A^{k}q_{0}=\sqrt{1-\epsilon_{0}^{2}}\lambda_{1}^{k}x_{*}+\epsilon_{0}A^{k}z_{0}.
\end{equation*}
Since the eigenvectors of a normal matrix are orthogonal, we have $x_{*}\bot A^{k}z_{0}$. Then
$$%\begin{eqnarray*}
	%\nonumber to remove numbering (before each equation)
	q_{k}= \dfrac{A^{k}q_{0}}{\|A^{k}q_{0}\|}=
	\sqrt{1-\epsilon_{0}^{2}} \dfrac{\lambda_{1}^{k}x_{*}}{\|A^{k}q_{0}\|}+\epsilon_{0}\dfrac{A^{k}z_{0}}{\|A^{k}q_{0}\|},
$$%\end{eqnarray*}
which shows that
\begin{equation}\label{Tank}
	|\tan\angle(q_{k},x_{*})|=\dfrac{\epsilon_{0}\|A^{k}z_{0}\|}{\sqrt{1-\epsilon_{0}^{2}} \|\lambda_{1}^{k}x_{*}\|}
	\leq\bigg|\frac{\lambda_{r+1}}{\lambda_{1}}\bigg|^{k}|\tan\angle(q_{0},x_{*})|
\end{equation}
as $\|A^kz_0\|\leq |\lambda_{r+1}|^k$.
Let $\epsilon_{k}=\sin\angle(q_{k},x_{*})$. Then
\begin{equation*}
	q_{k}=\sqrt{1-\epsilon_{k}^{2}}x_{*}+\epsilon_{k}\hat{z}_{k},
\end{equation*}
where the unit length $\hat{z}_{k}\in\text{span}\{x_{r+1},\ldots,x_{n}\}$ satisfies $\hat{z}_{k}^{H}x_{*}=0$ and $\hat{z}_{k}^{H}Ax_{*}=0$.
So
\begin{eqnarray*}%\label{Boundrho}
	% \nonumber to remove numbering (before each equation)
	\rho_{k}&=& q_{k}^{H}A q_{k} \\
	&=&  \big(\sqrt{1-\epsilon_{k}^{2}}x_{*}+\epsilon_{k}\hat{z}_{k}\big)^{H}A  \big(\sqrt{1-\epsilon_{k}^{2}}x_{*}+\epsilon_{k}\hat{z}_{k}\big) \\
	&=& (1-\epsilon_{k}^{2})\lambda_{1}+ \epsilon_{k}^{2}\hat{z}_{k}^{H}A\hat{z}_{k}.
\end{eqnarray*}
Therefore,
$$
|\rho_k-\lambda_1|\leq (|\lambda_1|+|\lambda_{r+1}|)\epsilon_k^2\leq 2|\lambda_1|\epsilon_k^2,
$$
which proves \eqref{Conrhok} as $\epsilon_k\leq |\tan\angle(q_{k},x_{*})|$.
\end{proof}

\begin{rem}
	In case $\epsilon_{0}=1$ in \eqref{Rex0}, i.e., $|\tan\angle(q_{0},x_{*})|=\infty$,
	then the initial vector $q_0$ is deficient in $\text{span}\{x_1,\ldots,x_{r}\}$.
	In this case, the power method does not work. Otherwise,
	$q_k\rightarrow x_*$ with the convergence rate $|\lambda_{r+1}/\lambda_{1}|$,
and $\rho_k\rightarrow\lambda_1$ twice as fast as $q_k\rightarrow x_*$ as indicated by
\eqref{Conrhok}.
\end{rem}

The initial vector $q_0\in\mathbb{R}^n$ in Algorithm \ref{alg-powerSTS} can be expressed as
\begin{eqnarray}\label{x1Express}
	% \nonumber to remove numbering (before each equation)
	q_0&=&\sum_{i=1}^{m}[\alpha_{i}(u_{i}+\text{i}v_{i})+\bar{\alpha}_{i}(u_{i}-\text{i}v_{i})]\\
	%+\sum_{i=1}^{n-2m}\vartheta_{i}\chi_{i} \\
	&=&2(\alpha_{1,1}u_{1}-\alpha_{1,2}v_{1})+ 2\sum_{i=2}^{m}(\alpha_{i,1}u_{i}-\alpha_{i,2}v_{i})%+\sum_{i=1}^{n-2m}\vartheta_{i}\chi_{i},  \nonumber
\end{eqnarray}
where $\alpha_{i}=\alpha_{i,1}+\text{i}\alpha_{i,2}, i=1,\ldots,m$. Based on Lemmas \ref{Orthpro} and  \ref{converpower}, we can establish the following convergence results on Algorithm \ref{alg-powerSTS}.

\begin{theorem}\label{ConvergenceSTS}
	The sequences $\{q_{2k}\}_{k=1}^{\infty}$ and $\{q_{2k-1}\}_{k=1}^{\infty}$ obtained by \Cref{alg-powerSTS}
	satisfy
	\begin{equation}\label{sino}
		|\tan\angle(q_{2k},x_{e})|\leq\bigg(\dfrac{\sigma_{2}}{\sigma_{1}}\bigg)^{2k}|\tan\angle(q_{0},x_{e})|
	\end{equation}
	and
	\begin{equation}\label{sine}
		|\tan\angle(q_{2k-1},x_{o})|\leq \bigg(\dfrac{\sigma_{2}}{\sigma_{1}}\bigg)^{2(k-1)}|\tan\angle(Sq_0,x_{o})|,
	\end{equation}
	\begin{equation}\label{sine1}
		|\tan\angle(q_{2k-1},x_{o})|\leq \bigg(\dfrac{\sigma_{2}}{\sigma_{1}}\bigg)^{2k-1}|\tan\angle(q_0,x_{e})|,
	\end{equation}
	where the unit length vectors
	\begin{equation}\label{xoe}
		x_{o}=\dfrac{-1}{|\alpha_{1}|}(\alpha_{1,1}v_{1}+\alpha_{1,2}u_{1})\ \ \mbox{and}\ \
		x_{e}=\dfrac{1}{|\alpha_{1}|}(\alpha_{1,1}u_{1}-\alpha_{1,2}v_{1})
	\end{equation}
	are orthogonal and are the real and imaginary parts of $x_{\pm 1}$ of $S$ corresponding to the dominant
	eigenvalues $\pm\mi\sigma_1$.
	Moreover,
	\begin{equation}\label{rhok2}
		|\rho_{k}-\sigma_{1}|\leq \dfrac{1}{2}\tan^{2}\angle(q_{2k+1},x_{o})\sigma_{1}+O(\tan^{4}\angle(q_{2k+1},x_{o}))
		=O\left(\bigg(\dfrac{\sigma_{2}}{\sigma_{1}}\bigg)^{4k}\right).
	\end{equation}
\end{theorem}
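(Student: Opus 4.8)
The plan is to observe that every iterate is a power-method iterate of the symmetric positive definite (hence normal) matrix $S^{T}S=-S^{2}$, so that Lemma~\ref{converpower} applies verbatim. By \eqref{OddEvenk}, $q_{2k}=c_{e}(S^{T}S)^{k}q_{0}$ and, since $S$ commutes with $S^{T}S=-S^{2}$, $q_{2k-1}=c_{o}S(S^{T}S)^{k-1}q_{0}=c_{o}(S^{T}S)^{k-1}(Sq_{0})$. By \eqref{eigSTS} the dominant eigenvalue of $S^{T}S$ is $\sigma_{1}^{2}$ with multiplicity two and eigenspace $\mathrm{span}\{u_{1},v_{1}\}$, and the next one is $\sigma_{2}^{2}$; thus Lemma~\ref{converpower} applies with $A=S^{T}S$, $r=2$, $\lambda_{1}=\sigma_{1}^{2}$, $\lambda_{r+1}=\sigma_{2}^{2}$, contributing the factor $(\sigma_{2}/\sigma_{1})^{2}$ per application of $S^{T}S$. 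To obtain \eqref{sino} I would identify its target vector as the normalized projection of $q_{0}$ onto $\mathrm{span}\{u_{1},v_{1}\}$: from \eqref{x1Express} and $\|u_{1}\|=\|v_{1}\|=1$, $u_{1}\perp v_{1}$ (Proposition~\ref{propS}), this projection is exactly $x_{e}$. For \eqref{sine} I would first record $Su_{j}=-\sigma_{j}v_{j}$ and $Sv_{j}=\sigma_{j}u_{j}$ from $S(u_{j}+\mi v_{j})=\mi\sigma_{j}(u_{j}+\mi v_{j})$, apply them to \eqref{x1Express} to evaluate $Sq_{0}$, and verify that its normalized projection onto $\mathrm{span}\{u_{1},v_{1}\}$ equals $x_{o}$; Lemma~\ref{converpower} with initial vector $Sq_{0}$ and $k-1$ applications then gives \eqref{sine}.

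The estimate \eqref{sine1} measures the odd iterate against $x_{e}$, so I would route it through the even iterate using $q_{2k-1}\propto Sq_{2(k-1)}$. The structural identities $Sx_{e}=\sigma_{1}x_{o}$ and $Sx_{o}=-\sigma_{1}x_{e}$ (immediate from the relations above) together with the facts that $S$ leaves $\mathrm{span}\{u_{i},v_{i}:i\ge 2\}$ invariant and satisfies $\|Sz\|\le\sigma_{2}\|z\|$ there do the work. Writing $q_{2(k-1)}=\cos\phi\,x_{e}+\sin\phi\,z$ with $z$ a unit vector in that complementary subspace---its $\mathrm{span}\{u_{1},v_{1}\}$-component lies along $x_{e}$ by the projection argument above---applying $S$ sends the $x_{e}$-part to $\sigma_{1}\cos\phi\,x_{o}$ and the $z$-part to $\sin\phi\,Sz\perp x_{o}$ with $\|Sz\|\le\sigma_{2}$. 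Hence $\tan\angle(q_{2k-1},x_{o})=\tan\angle(Sq_{2(k-1)},x_{o})\le(\sigma_{2}/\sigma_{1})\tan\angle(q_{2(k-1)},x_{e})$, and chaining with \eqref{sino} at index $k-1$ yields \eqref{sine1}.

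For the eigenvalue estimate \eqref{rhok2} the key simplification is $\rho_{k}=\|Sq_{2k+1}\|$: since $q_{2k+2}=-Sq_{2k+1}/\|Sq_{2k+1}\|$, we get $\rho_{k}=q_{2k+1}^{T}Sq_{2k+2}=q_{2k+1}^{T}S^{T}Sq_{2k+1}/\|Sq_{2k+1}\|=\|Sq_{2k+1}\|\le\sigma_{1}$. Decomposing $q_{2k+1}=\cos\theta\,x_{o}+\sin\theta\,z_{o}$ with $z_{o}\in\mathrm{span}\{u_{i},v_{i}:i\ge 2\}$ and using that $x_{o}$ is a $\sigma_{1}^{2}$-eigenvector of $S^{T}S$ while $z_{o}^{T}S^{T}Sz_{o}\le\sigma_{2}^{2}$, the cross terms vanish and $\|Sq_{2k+1}\|^{2}=\sigma_{1}^{2}\cos^{2}\theta+(z_{o}^{T}S^{T}Sz_{o})\sin^{2}\theta=\sigma_{1}^{2}-(\sigma_{1}^{2}-z_{o}^{T}S^{T}Sz_{o})\sin^{2}\theta$. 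Taking the square root and expanding gives $\sigma_{1}-\rho_{k}=\tfrac{1}{2}\sigma_{1}\,\dfrac{\sigma_{1}^{2}-z_{o}^{T}S^{T}Sz_{o}}{\sigma_{1}^{2}}\sin^{2}\theta+O(\sin^{4}\theta)$; bounding the middle factor by $1$, replacing $\sin^{2}\theta$ by $\tan^{2}\theta$ (they agree to $O(\tan^{4}\theta)$), and substituting \eqref{sine1} at index $k+1$ gives both the leading bound and the rate $O((\sigma_{2}/\sigma_{1})^{4k})$ in \eqref{rhok2}.

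I expect the main obstacle to be the careful bookkeeping of the orthogonal decompositions rather than any isolated hard estimate: verifying that the $\mathrm{span}\{u_{1},v_{1}\}$-components of $q_{2k}$ and $q_{2k+1}$ point exactly along $x_{e}$ and $x_{o}$ (so that the complementary parts land in the orthogonal complement of the target under $S$ and the cross terms in $\|Sq_{2k+1}\|^{2}$ drop out), and correctly tracking the index shifts when chaining the bounds. Once the identities $Sx_{e}=\sigma_{1}x_{o}$, $Sx_{o}=-\sigma_{1}x_{e}$ and these orthogonality facts are in hand, the remaining algebra is routine.
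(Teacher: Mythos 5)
Your proposal is correct and follows essentially the same route as the paper: view $q_{2k}$ and $q_{2k-1}$ as power iterates of $S^{T}S$ started from $q_{0}$ and $Sq_{0}$ respectively, invoke Lemma~\ref{converpower} with $r=2$ and $\lambda_{1}=\sigma_{1}^{2}$, use the identities $Sx_{e}=\sigma_{1}x_{o}$, $Sx_{o}=-\sigma_{1}x_{e}$ together with the bound $\|Sz\|\le\sigma_{2}$ on the complementary invariant subspace to pick up the extra factor $\sigma_{2}/\sigma_{1}$, and reduce $\rho_{k}$ to $\|Sq_{2k+1}\|$ before Taylor-expanding. The only cosmetic difference is where you spend that extra factor for \eqref{sine1} (you apply $S$ to $q_{2(k-1)}$ and chain with \eqref{sino}, while the paper applies it to $q_{0}$ and chains with \eqref{sine}); both give the same bound.
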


\begin{proof} Note that $(S^{T}S)^{k}u_{i}=\sigma_{i}^{2k}u_{i}, (S^{T}S)^{k}v_{i}=\sigma_{i}^{2k}v_{i}$.
From (\ref{OddEvenk}) and (\ref{x1Express}), apart from the normalizing factor, $q_{2k}$ is given by
\begin{eqnarray}\label{X2kplus1}
	% \nonumber to remove numbering (before each equation)
	%x_{2k+1}&=&c_{o}(S^{T}S)^{k}x_{1}\\ \nonumber
	(S^{T}S)^{k}q_0&=& 2(S^{T}S)^{k}\sum_{i=1}^{m}(\alpha_{i,1}u_{i}-\alpha_{i,2}v_{i})\\ \nonumber
	&=& 2\sigma_{1}^{2k}\bigg[(\alpha_{1,1}u_{1}-\alpha_{1,2}v_{1})
	+\sum_{i=2}^{n}\big(\frac{\sigma_{i}}{\sigma_{1}}\big)^{2k}
	(\alpha_{i,1}u_{i}-\alpha_{i,2}v_{i})\bigg].
\end{eqnarray}
Therefore, as $k\rightarrow\infty$, the term $\alpha_{1,1}u_{1}-\alpha_{1,2}v_{1}$
in (\ref{X2kplus1}) dominates the right-hand side of the above relation, showing that
$q_{2k}$ converges to
$x_{e}$ defined by \eqref{xoe}. Relation (\ref{X2kplus1}) indicates that the sequence $\{q_{2k}\}_{k=1}^{\infty}$ is obtained
by applying Algorithm \ref{ClassicalPower} to the positive definite matrix
$S^{T}S$ with the initial vector $q_0$.
By Lemma \ref{converpower}, bound (\ref{sino}) holds.

Note that $(SS^{T})^{k-1}Su_{i}=-\sigma_{i}^{2k-1}v_{i}$ and $(SS^{T})^{k-1}Sv_{i}=\sigma_{i}^{2k-1}u_{i},\
i=1,2,\ldots,m$. Similarly, based on (\ref{OddEvenk}) and (\ref{x1Express}), apart from a
normalizing factor, $q_{2k-1}$ is given by
\begin{eqnarray}\label{X2k}
	% \nonumber to remove numbering (before each equation)
	%x_{2k+1}&=&c_{o}(S^{T}S)^{k}x_{1}\\ \nonumber
	&~&(SS^{T})^{k-1}S q_0\\ \nonumber
	&=& 2(S^{T}S)^{k-1}S\sum_{i=1}^{m}(\alpha_{i,1}u_{i}-\alpha_{i,2}v_{i})\\ \nonumber
	&=& 2\sigma_{1}^{2k-1}\bigg[(-\alpha_{1,1}v_{1}-\alpha_{1,2}u_{1})
	+\sum_{i=2}^{m}\big(\frac{\sigma_{i}}{\sigma_{1}}\big)^{2k-1}
	(-\alpha_{i,1}v_{i}-\alpha_{i,2}u_{i})\bigg].
\end{eqnarray}
Therefore, as $k\rightarrow\infty$, the term $-\alpha_{1,1}v_{1}-\alpha_{1,2}u_{1}$
in (\ref{X2k}) dominates, proving that $q_{2k-1}$ converges to the unit length vector $x_{o}$ defined by
\eqref{xoe}. Since the sequence $\{q_{2k-1}\}_{k=1}^{\infty}$ is obtained by applying Algorithm \ref{ClassicalPower}
to the positive definite matrix $SS^{T}$ with the initial vector $Sq_0$, by Lemma \ref{converpower}, we have
\begin{equation*}
	|\tan\angle(q_{2k-1},x_{o})|\leq|\tan\angle(Sq_0,x_{o})|\bigg(\dfrac{\sigma_{2}}{\sigma_{1}}\bigg)^{2(k-1)},
\end{equation*}
proving (\ref{sine}). It is straightforward from item 3 of Proposition~\ref{propS} to justify that
$x_o$ and $x_e$ are orthogonal. Therefore, they form an orthonormal basis of ${\rm span}\{u_1,v_1\}=
{\rm span}\{u_1+\mi v_1,u_1-\mi v_1\}$. By Theorem~\ref{thm1} and Remark~\ref{rem1},
the conjugate eigenvectors of $S$ associated with the eigenvalues $\pm\mi\sigma_1$ are
$\frac{\sqrt{2}}{2}(x_o\pm\mi x_e)$. Therefore, the sequences $q_{2k-1}$ and $q_{2k}$ obtained
by Algorithm~\ref{alg-powerSTS} converge to the real and imaginary parts of the complex
conjugate eigenvectors of $S$ corresponding to the dominant eigenvalues $\pm\mi\sigma_1$, respectively.

Let $\epsilon_{0}=\sin\angle(q_0,x_{e})$. From (\ref{x1Express}), $q_0$ can be decomposed into
the orthogonal direct sum
\begin{equation}\label{x1}
	q_0=\sqrt{1-\epsilon_{0}^{2}}x_{e}+\epsilon_{0} z_{o},
\end{equation}
where $x_{e}\in\text{span}\{u_{1}+\text{i}v_{1},u_{1}-\text{i}v_{1}\}$ and the unit length $z_{o}\in\text{span}\{u_{2}+\text{i}v_{2},u_{2}-\text{i}v_{2},\ldots,u_{m}+\text{i}v_{m},u_{m}-\text{i}v_{m}\}$ satisfy $Sx_{e}=\sigma_{1}x_{o}$, $x_{o}^{H}Sz_{o}=0$.
Then
\begin{equation*}
	Sq_0=\sqrt{1-\epsilon_{0}^{2}}Sx_{e}+\epsilon_{0} Sz_{o}=\sigma_{1}\sqrt{1-\epsilon_{0}^{2}}x_{o}+\epsilon_{0} Sz_{o},
\end{equation*}
from which it follows that
\begin{equation*}
	|\tan\angle(Sq_0,x_{o})|=\dfrac{\epsilon_{0}\|Sz_{o}\|}{\sigma_{1}\sqrt{1-\epsilon_{0}^{2}}\|x_{o}\|}
	\leq\dfrac{\epsilon_{0}}{\sqrt{1-\epsilon_{0}^{2}}}\frac{\sigma_{2}}{\sigma_{1}}=
	|\tan\angle(q_0,x_{e})|\frac{\sigma_{2}}{\sigma_{1}}.
\end{equation*}
As a result, by (\ref{sine}) we have proved (\ref{sine1}). 	

Now we prove the convergence of $\{\rho_{k}\}_{k=1}^{\infty}$. Let $\epsilon_{o,k}=\sin\angle(q_{2k+1},x_{o})$. From (\ref{X2k}), $q_{2k+1}$ can be written as
\begin{equation}\label{x1o}
	q_{2k+1}=\sqrt{1-\epsilon_{o,k}^{2}}x_{o}+\epsilon_{o,k} z_{o,k},
\end{equation}
where $x_{o}\in\text{span}\{u_{1}+\text{i}v_{1},u_{1}-\text{i}v_{1}\}$ is the vector
defined as in (\ref{xoe}), and the unit length vector
$z_{o,k}\in\text{span}\{u_{2}+\text{i}v_{2},u_{2}-\text{i}v_{2},\ldots,u_{m}+\text{i}v_{m},u_{m}-\text{i}v_{m}\}$
satisfies $x_{o}^{T}z_{o,k}=0$.
Since $x_{e}^{T}(Sz_{o,k})=0$
and $Sx_{o}=-\sigma_{1}x_{e}$, from (\ref{x1o}), we have
\begin{eqnarray*}
	% \nonumber to remove numbering (before each equation)
	\|Sq_{2k+1}\|^{2}&=&(\sqrt{1-\epsilon_{o,k}^{2}}Sx_{o}+\epsilon_{o,k} Sz_{o,k})^{T}(\sqrt{1-\epsilon_{o,k}^{2}}Sx_{o}+\epsilon_{o,k} Sz_{o,k}) \\ \nonumber
	&=& (-\sqrt{1-\epsilon_{o,k}^{2}}\sigma_{1}x_{e}+\epsilon_{o,k} Sz_{o,k})^{T}(-\sqrt{1-\epsilon_{o,k}^{2}}\sigma_{1}x_{e}+\epsilon_{o,k} Sz_{o,k}) \\ \nonumber
	&=& (1-\epsilon_{o,k}^{2})\sigma_{1}^{2}+\epsilon_{o,k}^{2}z_{o,k}^{T}S^{T}Sz_{o,k}.
	%\\ \nonumber % In fact is O((\frac{\sigma_{2}}{\sigma_{1}})^{4k-1})
	%&=& \sigma_{1}+O\bigg(\bigg(\frac{\sigma_{2}}{\sigma_{1}}\bigg)^{4k}\bigg).
\end{eqnarray*}
Then
\begin{eqnarray*}
	% \nonumber to remove numbering (before each equation)
	\rho_{k}&=& q_{2k+1}^{T}S q_{2k+2} \\ \nonumber
	&=&  \dfrac{-q_{2k+1}^{T}S^{2}q_{2k+1}}{\|-Sq_{2k+1}\|} \\ \nonumber
	&=& \|Sq_{2k+1}\| \\ \nonumber
&=&\sqrt{(1-\epsilon_{o,k}^{2})\sigma_{1}^{2}+\epsilon_{o,k}^{2}z_{o,k}^{T}S^{T}Sz_{o,k}}\\ \nonumber
&=&\sigma_{1}\sqrt{1-\epsilon_{o,k}^{2}\bigg(1-\dfrac{z_{o,k}^{T}S^{T}Sz_{o,k}}{\sigma_{1}^{2}}\bigg)}\\ \nonumber
 &=&\sigma_{1}\bigg[1-\dfrac{1}{2}\epsilon_{o,k}^{2}\bigg(1-\dfrac{z_{o,k}^{T}S^{T}Sz_{o,k}}{\sigma_{1}^{2}}\bigg)+O(\epsilon_{o,k}^{4})\bigg].
\end{eqnarray*}
Since  $|z_{o,k}^{T}S^{T}Sz_{o,k}|\leq\sigma_{2}^{2}<\sigma_{1}^{2}$, from the above we obtain
\begin{equation}\label{Boundrho1}
|\rho_{k}-\sigma_{1}| \leq \frac{1}{2}\epsilon_{o,k}^{2}\sigma_{1}
+O\big(\epsilon_{o,k}^{4}\big),
\end{equation}
which, together with \eqref{sine1}, proves \eqref{rhok2}.
\end{proof}

\begin{rem}\label{tanoe}
	This theorem indicates that $\rho_k$ converges twice as fast as $q_{2k}$ and $q_{2k-1}$ and
	the error of $\rho_k$ is approximately squares of the latter two ones.
\end{rem}

\begin{rem}\label{strc1}
By Algorithm \ref{alg-powerSTS} and Theorem~\ref{ConvergenceSTS}, at iteration $k$
we can construct complex conjugate approximations
$\big(\pm\mi \rho_{k},\frac{\sqrt{2}}{2}(q_{2k+1}\pm\mi q_{2k+2})\big)$ to
the complex conjugate dominant eigenpairs
$\big(\pm\mi \sigma_1,\frac{\sqrt{2}}{2}(u_1\pm\mi v_1)\big)$ of $S$.
This forms a complete algorithm when a reliable
stopping criterion is provided.
From Lemma \ref{Orthpro} and Theorem \ref{ConvergenceSTS},
the resulting Algorithm \ref{alg-powerSTS} is structure-preserving:
The approximate eigenvalues $\pm\mi \rho_{k}$ obtained are purely imaginary, and
the real and imaginary parts of the approximate eigenvectors $\frac{\sqrt{2}}{2}(q_{2k+1}\pm\mi q_{2k+2})$ are equal in
length and mutually orthogonal.
\end{rem}

Let us come back to line 2 of Algorithm \ref{alg-powerSTS} and design a reliable and
general-purpose stopping criterion. By definition, the absolute residual norms of approximate
eigenpairs $\big(\pm\mi \rho_{k},\frac{\sqrt{2}}{2}(q_{2k+1}\pm\mi
q_{2k+2})\big)$ are
\begin{eqnarray}\label{Aberror}
	\|r_{\pm k}\|&=&\bigg\|\frac{\sqrt{2}}{2}S(q_{2k+1}\pm\mi q_{2k+2})\mp\mi\frac{\sqrt{2}}{2} \rho_{k}(q_{2k+1}\pm\mi q_{2k+2})\bigg\| \\ \nonumber
	&=&\frac{\sqrt{2}}{2} \bigg\|(Sq_{2k+1}+ \rho_{k}q_{2k+2})\mp\mi(\rho_{k}q_{2k+1}- Sq_{2k+2} )\bigg\| \\ \nonumber
	&=&\frac{\sqrt{2}}{2} \sqrt{\|Sq_{2k+1}+ \rho_{k}q_{2k+2}\|^{2}+\|Sq_{2k+2}-\rho_{k}q_{2k+1}\|^{2}}.
\end{eqnarray}
Therefore, in practical computations, Algorithm \ref{alg-powerSTS} stops
when the relative residual norms
\begin{equation}\label{stop1}
	\text{ERR}_{k}=\frac{\|r_{\pm k}\|}{|\rho_k|}< tol
\end{equation}
for a prescribed convergence tolerance $tol$. This is a general and reliable
stopping criterion for $tol\geq O(\epsilon_{\rm mach})$ with $\epsilon_{\rm mach}$
being the machine precision and the constant in the big $O(\cdot)$ being generic, say
100.

%\begin{rem}\label{SeveralCase}
As a matter of fact, \Cref{alg-powerSTS} is more general and does not require that
the complex conjugate eigenvalues $\pm\mi\sigma_1$ be simple. To see this, we
show that Theorem \ref{ConvergenceSTS} can be extended to the case that the dominant eigenvalues $\pm\mi\sigma_{1}$
are multiple. Assume that their multiplicities are $r$ and $x_{\pm i}=\frac{\sqrt{2}}{2}(u_i\pm\mi v_i)$
are the associated $r$ orthonormal eigenvectors. Then (\ref{X2kplus1}) can be rewritten as
\begin{eqnarray}\label{X2kplus1M}
	% \nonumber to remove numbering (before each equation)
	%x_{2k+1}&=&c_{o}(S^{T}S)^{k}x_{1}\\ \nonumber
	(S^{T}S)^{k}q_0&=& 2(S^{T}S)^{k}\sum_{i=1}^{m}(\alpha_{i,1}u_{i}-\alpha_{i,2}v_{i})\\ \nonumber
	&=& 2\sigma_{1}^{2k}\bigg[\sum_{i=1}^{r}(\alpha_{i,1}u_{i}-\alpha_{i,2}v_{i})
	+\sum_{i=r+1}^{m}\big(\frac{\sigma_{i}}{\sigma_{1}}\big)^{2k}
	(\alpha_{i,1}u_{i}-\alpha_{i,2}v_{i})\bigg].
\end{eqnarray}
Therefore, as $k\rightarrow\infty$, $q_{2k}$ converges to the following unit length vector
\begin{equation}\label{xeM}
x_{e}=\dfrac{1}{n_{\alpha}}\sum_{i=1}^{r}(\alpha_{i,1}u_{i}-\alpha_{i,2}v_{i}),
\end{equation}
where $n_{\alpha}=\sum_{i=1}^{r}(\alpha^{2}_{i,1}+\alpha^{2}_{i,2})$.
Meanwhile, (\ref{X2k}) can be rewritten as
\begin{eqnarray}\label{X2kM}
	% \nonumber to remove numbering (before each equation)
	%x_{2k+1}&=&c_{o}(S^{T}S)^{k}x_{1}\\ \nonumber
	&~&(SS^{T})^{k-1}S q_0\\ \nonumber
%	&=& 2(S^{T}S)^{k-1}S\sum_{i=1}^{m}(\alpha_{i,1}u_{i}-\alpha_{i,2}v_{i})\\ \nonumber
	&=& 2\sigma_{1}^{2k-1}\bigg[\sum_{i=1}^{r}(-\alpha_{i,1}v_{i}-\alpha_{i,2}u_{i})
	+\sum_{i=r+1}^{m}\big(\frac{\sigma_{i}}{\sigma_{1}}\big)^{2k-1}
	(-\alpha_{i,1}v_{i}-\alpha_{i,2}u_{i})\bigg],
\end{eqnarray}
proving that $q_{2k-1}$ converges to the following unit length vector
\begin{equation}\label{xoM}
	x_{o}=\dfrac{1}{n_{\alpha}}\sum_{i=1}^{r}(-\alpha_{i,1}v_{i}-\alpha_{i,2}u_{i}).
\end{equation}

It is straightforward to justify that $x_e$ and $x_o$ in (\ref{xeM}) and (\ref{xoM}) are orthogonal.
Moreover, we have
\begin{equation*}
	Sx_{o}=-\sigma_{1}x_{e} \ \mbox{and}\ \ Sx_{e}=\sigma_{1}x_{o},
\end{equation*}
from which it follows that
$$
S(x_{o}\pm\mi x_{e})=\pm\mi\sigma_{1}(x_{o}\pm\mi x_{e}),
$$
meaning that $\frac{\sqrt{2}}{2}(x_o\pm \mi x_e)$ are the unit-length eigenvectors of $S$ corresponding to the complex
conjugate dominant eigenvalues $\pm \mi \sigma_1$.
As we have proved above, the sequences $q_{2k-1}$ and $q_{2k}$ obtained by Algorithm \ref{alg-powerSTS} converge to the real and imaginary parts $x_o$ and $x_e$.
Moreover, (\ref{sino})--(\ref{sine1}) hold with the vectors $x_{e}$ and $x_{o}$ in (\ref{xoe}) replaced by the vectors in (\ref{xeM}) and (\ref{xoM}), respectively.

The above shows that \Cref{alg-powerSTS}  directly
works on $S$ whose dominant eigenvalues are multiple.
%\end{rem}

\section{The SSP method with deflation for computing several complex conjugate dominant eigenpairs}\label{deflation}

\Cref{alg-powerSTS} can compute only the complex conjugate eigenpairs $(\pm\mi\sigma_1,x_{\pm 1})$.
Suppose the $s$ complex conjugate eigenpairs $(\pm\mi\sigma_i,x_{\pm i})$ with $s>1$ are of interest 
and the eigenvalues are labeled as 
$$
\sigma_1\geq\sigma_2\geq\cdots\geq \sigma_s>\sigma_{s+1}\geq\cdots\geq\sigma_m.
$$
Then
\Cref{alg-powerSTS} itself is not applicable.  To this end, we introduce an effective deflation technique into \Cref{alg-powerSTS} to carry out this task.

Recall from Theorem 2.1 of \cite{Huang} that
\begin{equation}\label{SVD}
	S=\begin{pmatrix}
		U_m & V_m  \\
	\end{pmatrix}\begin{pmatrix}
	\Sigma_m &   \\
	 &  \Sigma_m \\
\end{pmatrix}\begin{pmatrix}
V_m&-U_m  \\
\end{pmatrix}^{T}\\,
\end{equation}
where $U_m=(u_{1},u_{2},\ldots,u_{m})\in \mathbb{R}^{n\times m}$ and $V_m=(v_{1},v_{2},\ldots,v_{m})\in \mathbb{R}^{n\times m}$ are orthonormal and biorthogonal, and $\Sigma_m=\text{diag}\{\sigma_{1},\sigma_{2},\ldots,\sigma_{m}\}\in\mathbb{R}^{m\times m}$ with
$\sigma_{i}>0,i=1,2,\ldots,m$. Suppose the $i<s$ complex conjugate dominant eigenpairs
$(\pm\mi\sigma_{i},x_{\pm i})$ have
been available and we want to compute the next eigenpairs $(\pm\mi\sigma_{i+1},x_{\pm (i+1)})$. Then we can deflate those available ones from the SVD \eqref{SVD} of $S$ in the following way:
\begin{equation}\label{newS}
S_{i} = S-\begin{pmatrix}
	U_{i} & V_{i}  \\
\end{pmatrix}\begin{pmatrix}
	\Sigma_{i} &   \\
	&  \Sigma_{i} \\
\end{pmatrix}\begin{pmatrix}
	V_{i}&-U_{i}  \\
\end{pmatrix}^{T}\\,
\end{equation}
where
\begin{equation*}
U_{i}=(u_{1},u_{2},\ldots,u_{i})\in \mathbb{R}^{n\times i}, \ \
V_{i}=(v_{1},v_{2},\ldots,v_{i})\in \mathbb{R}^{n\times i}
\end{equation*}
and
 $\Sigma_{i}=\text{diag}\{\sigma_{1},\sigma_{2},\ldots,\sigma_{i}\}\in\mathbb{R}^{i\times i}$.
It is easy to justify that $S_i$ is skew-symmetric and its eigenpairs are $(0,x_{\pm j}),\ j=1,2,\ldots,i$
and $(\pm\mi\sigma_j,x_{\pm j}),\ j=i+1,\ldots,m$. Therefore, $(\pm\mi\sigma_{i+1},x_{\pm (i+1)})$
are the complex conjugate dominant eigenpairs of $S_i, i=1,2,\ldots,s-1$, so that theoretically
we can apply \Cref{alg-powerSTS} to $S_i$ and computes $(\pm\mi\sigma_{i+1},x_{\pm (i+1)})$. Proceed
in such a way until all the desired $(\pm\mi\sigma_i,x_{\pm i}),\ i=1,2,\ldots,s$ are found.

Now we discuss how to apply \Cref{alg-powerSTS} in practical computations to compute the $s$ 
complex conjugate dominant eigenpairs. Note that the eigenpairs $(\pm\mi\sigma_j,x_{\pm j}),\ 
j=1,2,\ldots,i$ cannot be computed exactly. Instead, for a given stopping tolerance, 
what we obtain are their {\em converged} approximations 
$(\pm\mi\tilde{\sigma}_j,\tilde{x}_{\pm j})$ with $\tilde{x}_{\pm 
j}=\tilde{u}_{j}\pm\mi\tilde{v}_{j}, \ j=1,2,\ldots,i$. In computations, the matrix 
$S_{i}$ in \eqref{newS} is replaced by the following skew-symmetric matrix
\begin{equation}\label{newStidle}
	\tilde{S}_{i} = S-\begin{pmatrix}
		\tilde{U}_{i} & \tilde{V}_{i}  \\
	\end{pmatrix}\begin{pmatrix}
		\tilde{\Sigma}_{i} &   \\
		&  \tilde{\Sigma}_{i} \\
	\end{pmatrix}\begin{pmatrix}
		\tilde{V}_{i}&-\tilde{U}_{i}  \\
	\end{pmatrix}^{T}\\,
\end{equation}
where
\begin{equation*}
	\tilde{U}_{i}=(\tilde{u}_{1},\tilde{u}_{2},\ldots,\tilde{u}_{i})\in \mathbb{R}^{n\times i}, \ \
	\tilde{V}_{i}=(\tilde{v}_{1},\tilde{v}_{2},\ldots,\tilde{v}_{i})\in \mathbb{R}^{n\times i}
\end{equation*}
and
$\tilde{\Sigma}_{i}=\text{diag}\{\tilde{\sigma}_{1},\tilde{\sigma}_{2},\ldots,\tilde{\sigma}_{i}\}\in\mathbb{R}^{i\times i}$.

Since $\tilde{S}_i$ in \eqref{newStidle} is skew-symmetric, \Cref{alg-powerSTS} can be applied to $\tilde{S}_i$ to compute the converged approximations $(\pm\mi\tilde{\sigma}_{i+1},\tilde{u}_{i+1}\pm\tilde{v}_{i+1})$ to the 
complex conjugate dominant eigenpairs $(\pm\mi\sigma_{i+1},u_{i+1}\pm v_{i+1})$. 
However, it is particularly important that we do not need to form $\tilde{S}_i$ explicitly since
the only action of $\tilde{S}_i$ in \Cref{alg-powerSTS}  is to form two matrix-vector products, 
as shown in step 5 of Algorithm \ref{alg-MpowerSTS}. When $s=1$, \Cref{alg-MpowerSTS} reduces to \Cref{alg-powerSTS}. 

\begin{algorithm}
	\caption{Compute $s$ complex conjugate dominant eigenpairs by \Cref{alg-powerSTS}}
	\label{alg-MpowerSTS}
	%\begin{algorithmic}
	\begin{enumerate}
		\item \quad  {\bf If} $s=1$, apply \Cref{alg-powerSTS} to $S$; {\bf Otherwise} go to Step 2;
		\item \quad {\bf For} $i=2:s$
		\item \quad Choose an unit length initial vector $q_0\in \mathbb{R}^n$, and set $k=0$;
		\item \quad {\bf While} not converged %$\|Sq_{k+2}-\rho_{j} q_{k+1}\|+\|Sq_{k+1}+\rho_{j} q_{k+2}\|\geq tol$
		%\item %\quad$q_{2k+1}=Sq_{2k}-\sum_{i=1}^{s}(\tilde{\sigma}_{i}\tilde{v}_{i}^{T}q_{2k}\tilde{u}_{i}-\tilde{\sigma}_{i}\tilde{u}_{i}^{T}q_{2k}\tilde{v}_{i})$; $q_{2k+1}=q_{2k+1}/\|q_{2k+1}\|$;
		\item \quad\quad
		\begin{eqnarray*}
		\quad	&&q_{2k+1}=Sq_{2k}-\sum_{j=1}^{i}(\tilde{\sigma}_{j}\tilde{v}_{j}^{T}q_{2k}\tilde{u}_{j}-\tilde{\sigma}_{j}\tilde{u}_{j}^{T}q_{2k}\tilde{v}_{j}); q_{2k+1}=q_{2k+1}/\|q_{2k+1}\| \\
			&&q_{2k+2}=-Sq_{2k+1}+\sum_{j=1}^{i}(\tilde{\sigma}_{j}\tilde{v}_{j}^{T}q_{2k+1}\tilde{u}_{j}-\tilde{\sigma}_{j}\tilde{u}_{j}^{T}q_{2k+1}\tilde{v}_{j}); q_{2k+2}=q_{2k+2}/\|q_{2k+2}\|
		\end{eqnarray*}
	
		\item \quad$\rho_{k}=q_{2k+1}^{T}(Sq_{2k+2})$;
		
		\item \quad $k=k+1$
		%			\item \quad\quad\quad Perform steps 2-4;
		\item \quad {\bf End while}
		\item \quad $\tilde{u}_{i+1}=q_{2k+1}$; $\tilde{v}_{i+1}=q_{2k+2}$; $\tilde{\sigma}_{i+1}=\rho_{k}$;
		%\item \quad Update $S=S-\sigma_{i}u_{i}v_{i}^{T}+\sigma_{i}v_{i}u_{i}^{T}$;
		\item \quad {\bf End for}
	\end{enumerate}
	%\end{algorithmic}
\end{algorithm}

%In \Cref{alg-MpowerSTS}, we use the same stopping criterion as that defined in \eqref{stop1}. 
We must be careful on the stopping criterion in \Cref{alg-MpowerSTS} since (i) we are 
concerned with the eigenvalue problem of $S$ rather than that of $\tilde{S}_i$ and (ii) we must take 
numerical backward stability into account. To this end, 
when computing the $i$-th complex conjugate eigenpairs of $S$ with $i\geq2$, we take the stopping 
criterion as 
\begin{equation}\label{stop2}
	\text{ERR}_{k}=\frac{\|r_{\pm k}\|}{\tilde{\sigma}_{1}}< tol
\end{equation}
with $tol\geq O(\epsilon_{\rm mach})$ being a prescribed convergence tolerance, where $\|r_{\pm k}\|$ are
the absolute residual norms of $\big(\pm\mi \rho_{k},\frac{\sqrt{2}}{2}(q_{2k+1}\pm\mi
q_{2k+2})\big)$ as the approximate 
complex conjugate eigenpairs of $S$ rather than $\tilde{S}_{i}$ (cf. \eqref{Aberror}). Keep in mind 
that the denominator in \eqref{stop2} is always the converged $\tilde{\sigma}_1\approx \|S\|$ 
for $i\geq 2$. 

\section{Numerical examples}\label{secExam}
In this section, we report numerical experiments to show the performance of \Cref{alg-powerSTS} and
 \Cref{alg-MpowerSTS} for computing the complex conjugate dominant eigenpairs of a large real skew-symmetric $S$. All the
 experiments were performed on a PC with a 2.60 GHz central processing unit
 Intel(R) Core(TM) i9-11950HM, 32.00 GB memory under Microsoft Windows 10 64-bit operating system using MATLAB R2021b with $\epsilon_{\rm mach}=2.22\times 10^{-16}$. The initial vector in the algorithms
 was taken as the unit-length vector normalized from $S(1,1,\ldots,1)^{T}$ and the stopping tolerance of the relative residual norms $\text{ERR}_{k}$ defined in (\ref{stop1}) and \eqref{stop2} were fixed as $tol=10^{-8}$.
 In the follow-up numerical results, the notations IT, TIME and MV
 denote the number of iterations, CPU time in seconds and matrix-vector products with $S$. %In the follow-up numerical results, the SSP method presented in Algorithm \ref{alg-powerSTS} is denoted by SSP-STS.

\begin{example}\label{exa1}
	This example is from Section $11$ of \cite{Greif}, see also \cite{Huang}. The skew-symmetric matrix $S$ 
is obtained by the approximate finite difference discretization of a constant convective term on the unit cube $(0,1)\times(0,1)\times(0,1)$ in three dimensions:
	\begin{equation}
		S=I_{l}\otimes I_{l}\otimes T_{l}(\zeta_{1})+I_{l}\otimes T_{l}(\zeta_{2})\otimes I_{l}+T_{l}(\zeta_{3})\otimes I_{l}\otimes I_{l},
	\end{equation}
	where $l$ is a given positive integer and
	\begin{equation*}
		T_{l}(\zeta_{k})=\begin{pmatrix}
			0 & \zeta_{k} &  & \\
			-\zeta_{k} &\ddots & \ddots & \\
			& \ddots & \ddots & \zeta_{k}\\
			&  & -\zeta_{k} &0 \\
		\end{pmatrix}\in \mathbb{R}^{l\times l}, \ \ k=1,2,3
	\end{equation*}
	are three skew-symmetric tridiagonal Toeplitz matrices with $\zeta_{k}, k=1,2,3$ being real scalars. 
We take $\zeta_{1},\zeta_{2},\zeta_{3}$ to be $0.4,0.5,0.6$, and  
compute the one and five complex conjugate dominant eigenpairs of the three $S$'s resulting from   
$l=8,16,32$, which correspond to $n=512, \ 4096,\ 32768$. \Cref{tab:ratio} lists the convergence rates $\gamma_{i}=\sigma_{i+1}/\sigma_{i}, \ i=1,\ldots,5$ for the five complex conjugate dominant eigenpairs of $S$ when $l=8,16,32$, respectively.  \Cref{tab:comp1} reports
the results obtained by \Cref{alg-powerSTS} and \Cref{alg-MpowerSTS}. Moreover, \Cref{RES1} shows the relative residual norms of \Cref{alg-MpowerSTS} and the average CPU time (aveCPU) of each iteration of \Cref{alg-MpowerSTS} with $l=32$ and $s=1,\ldots,5$. 
\end{example}

\begin{table}[h]
	\caption{The convergence rate of \Cref{alg-MpowerSTS} for computing 
the complex conjugate dominant eigenpairs of $S$ in Example \ref{exa1}.
		\label{tab:ratio}}
	\begin{center}
		\def\arraystretch{1.5}
		\small
		\centering\tabcolsep12.0pt
		\begin{tabular}{c|ccccc}
			\hline
			$l$	&  $\gamma_{1}$ & $\gamma_{2}$ &$\gamma_{3}$ & $\gamma_{4}$ & $\gamma_{5}$ \tabularnewline	\hline
			8  & 0.9507& 0.9870& 0.9869&  0.9601& 0.9861 \tabularnewline
			16 & 0.9863& 0.9965& 0.9965&  0.9895& 0.9965\tabularnewline
			32  & 0.9964& 0.9991& 0.9991&0.9973& 0.9991 \tabularnewline		
			\hline
		\end{tabular}
	\end{center}
\end{table}

\begin{table}[h]
	\caption{The IT, TIME, MV's used by \Cref{alg-powerSTS} and \Cref{alg-MpowerSTS} to compute the complex conjugate dominant eigenpairs of $S$ in Example \ref{exa1}.
		\label{tab:comp1}}
	\begin{center}
		\def\arraystretch{1.5}
		\small
		\centering\tabcolsep12.0pt
		\begin{tabular}{c|ccc|ccc}
			\hline
			$l$ &
			\multicolumn{3}{c|}{$s=1$, \Cref{alg-powerSTS}}&\multicolumn{3}{c}{$s=5$, \Cref{alg-MpowerSTS}}\tabularnewline
			%\cline{4-13}
			&  IT & TIME &MV &  IT & TIME &MV \tabularnewline	\hline
			8  & 164& 0.0017& 329&  1975& 0.0438& 3955 \tabularnewline
			16 & 551& 0.0269& 1103&  6865& 0.9529& 13735 \tabularnewline
			32  & 1906& 0.8695& 3813&23720& 50.0593& 47445\tabularnewline		
			\hline
		\end{tabular}
	\end{center}
\end{table}

\Cref{tab:ratio} indicates that each of $\gamma_i,i=1,2,\ldots,5$ increases as $l$ does. Consequently, 
by \Cref{converpower}, it is expected that the number of iterations increases $l$ does for computing 
each of the five complex conjugate dominant eigenpairs of $S$. This is indeed confirmed by the results 
in \Cref{tab:comp1} for $s=1$. For the other cases, the results are similar but we do not report them due 
to space. 

In the meantime, from \Cref{tab:ratio}, we expect that fewer number of iterations should be used to 
compute the first and fourth complex conjugate dominant eigenpairs of $S$ than the others because the 
convergence rates $\gamma_{1}$ and $\gamma_{4}$ are smaller than $\gamma_{2},\gamma_{3}$ and $\gamma_{5}$ 
for three different $l$'s. \Cref{RES1}(a) clearly confirms our theoretical predictions, where the five 
curves depict the convergence processes of computing the five complex conjugate dominant eigenpairs for 
$l=32$. From \Cref{RES1}(b), we can also see that the average CPU time per iteration monotonically 
increases as $s$ increases from $1$ to $5$. This is because the deflation process in \Cref{alg-MpowerSTS} 
needs to implement two matrix-vector products associated with $S$ in step 5, which requires more 
computational costs when $s$ becomes large. 
\begin{figure}[h!]
	\centering\tabcolsep=-8mm
	{\footnotesize{\doublerulesep1pt
			\begin{tabular}
				[c]{c}%
				\includegraphics[height=4.0in]{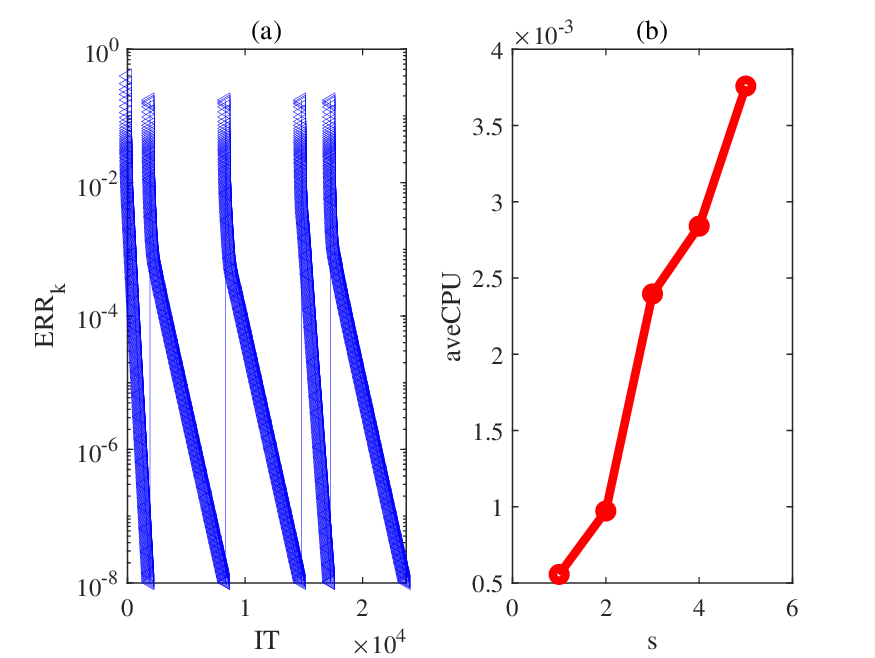}
	\end{tabular}}}
	\caption{(a): convergence curves of the relative residual norms of \Cref{alg-MpowerSTS} for $l=32$, 
$s=5$; (b): the aveCPU for $s=1,2,\ldots,5$.}
	\label{RES1}
\end{figure}

\begin{figure}[h!]
	\centering\tabcolsep=-8mm
	{\footnotesize{\doublerulesep1pt
			\begin{tabular}
				[c]{c}%
				\includegraphics[height=4.0in]{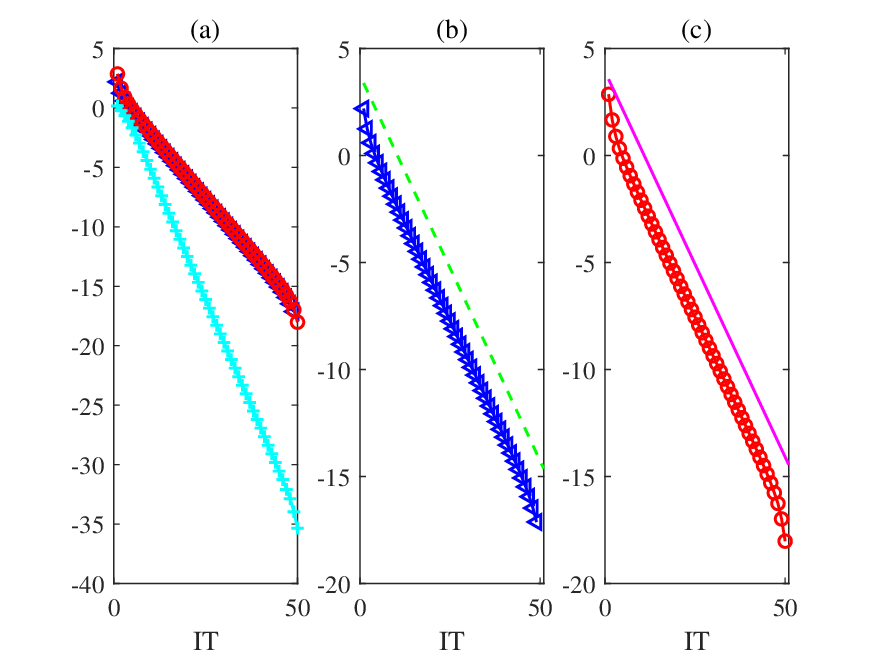}
	\end{tabular}}}
	\caption{Curves of $\log\big(|\tan\angle(x_{2k-1},x_{o})|\big)${\rm (}circle{\rm )},  $\log\big(|\tan\angle(x_{2k},x_{e})|\big)${\rm (}triangle{\rm )}, $\log\big(|\rho_{k}-\sigma_{1}|\big)$ {\rm (}plus{\rm )},  $\log\big(\big(\dfrac{\sigma_{2}}{\sigma_{1}}\big)^{2k}|\tan\angle(q_{0},x_{e})|\big)$ {\rm (}dashed line{\rm )} and $\log\big(\big(\dfrac{\sigma_{2}}{\sigma_{1}}\big)^{2k-1}|\tan\angle(q_{0},x_{e})|\big)$ {\rm (}solid line{\rm )} for the matrix $S$ in Example \ref{exa1} with $l=4$.}
	\label{fig1}
\end{figure}
In Figure \ref{fig1}, we show the five curves of the three relative actual errors
\begin{equation*}
	\log\big(|\tan\angle(q_{2k-1},x_{o})|\big),\ \ \log\big(|\tan\angle(q_{2k},x_{e})|\big), \ \  \log\big(|\rho_{k}-\sigma_{1}|\big)
\end{equation*}
and the error bounds for the former two: 
\begin{equation*}
	\log\big(\big(\frac{\sigma_{2}}{\sigma_{1}}\big)^{2k}|\tan\angle(q_{0},x_{e})|\big)\ \ \mbox{and}\ \  \log\big(\big(\frac{\sigma_{2}}{\sigma_{1}}\big)^{2k-1}|\tan\angle(q_{0},x_{e})|\big)
\end{equation*}
versus iteration number IT. It can be seen from Figure \ref{fig1}(a) that the error of $\rho_k$ is approximately squares of the errors of $q_{2k-1}$ and $q_{2k}$. This verifies the claims in Remark \ref{tanoe}. From Figure \ref{fig1}(b)-(c), one can see that the error bounds (\ref{sino}) and (\ref{sine1}) in Theorem \ref{ConvergenceSTS} are sharp.

\begin{example}\label{exa2}
In this example, we test the skew-symmetric matrices {\sf plsk1919} and {\sf plskz362} 
from the 2D/3D problems in the SuiteSparse Matrix
Collection \cite{Davis2011TheUO}. These two matrices are $1919\times 1919$ and $362\times 362$. 
We compute their complex conjugate dominant eigenpairs with $s=1,5,10,15$, and Table \ref{tab:comp4} displays the results. Figure \ref{fig2} shows the $5$ curves of the relative residual norms of \Cref{alg-MpowerSTS} for solving these two problems when $s=5$.

\end{example}
\begin{table}[h]
	\caption{The IT, TIME, MV's used by \Cref{alg-MpowerSTS} to compute the complex conjugate dominant eigenpairs of $S$ in Example \ref{exa2}.
		\label{tab:comp4}}
	\begin{center}
		\def\arraystretch{1.3}
		\small
		\centering\tabcolsep6.0pt
		\begin{tabular}{|c|cccc|cccc|}
			\hline
			Matrix	&\multicolumn{4}{c|}{$s=1$}&\multicolumn{4}{c|}{$s=5$}\tabularnewline
			%\cline{4-13}
			& IT & TIME &MV & aveCPU& IT & TIME &MV & aveCPU\tabularnewline	\hline
			plsk1919	&  127& 0.0050& 255& 3.9370e-05&3280&0.2094& 6565&6.3841e-05\tabularnewline	
			plskz362	&  167 &0.0012& 335&7.1856e-06& 2808& 0.0616& 5621&2.1937e-05\tabularnewline	
			\hline
			Matrix	& \multicolumn{4}{c|}{$s=10$}&\multicolumn{4}{c|}{$s=15$}\tabularnewline
			%\cline{4-13}
			&  IT & TIME &MV & aveCPU& IT & TIME &MV & aveCPU\tabularnewline	\hline
			plsk1919	&  6062& 0.6059&12134& 9.9951e-05&10001&1.4731& 20017&1.4730e-04\tabularnewline	
			plskz362	&5719& 0.1613& 11448&  2.8204e-05&10299& 0.3512& 20613&3.4100e-05
			\tabularnewline	
			\hline
		\end{tabular}
	\end{center}
\end{table}

From Table \ref{tab:comp4}, we see that  
\Cref{alg-MpowerSTS} converges for computing the $15$ complex conjugate dominant eigenpairs of the {\sf 
plsk1919} and {\sf plskz362} matrices. Also, it can be seen from this table that the average CPU time 
aveCPU of \Cref{alg-MpowerSTS} per iteration increases as $s$ does. This is in accordance with the results 
mentioned earlier in the numerical reports on \Cref{exa1}.

\begin{figure}[h!]
	\centering\tabcolsep=-10mm
	{\footnotesize{\doublerulesep1pt
			\begin{tabular}
				[c]{c}%
				\includegraphics[height=3.5in]{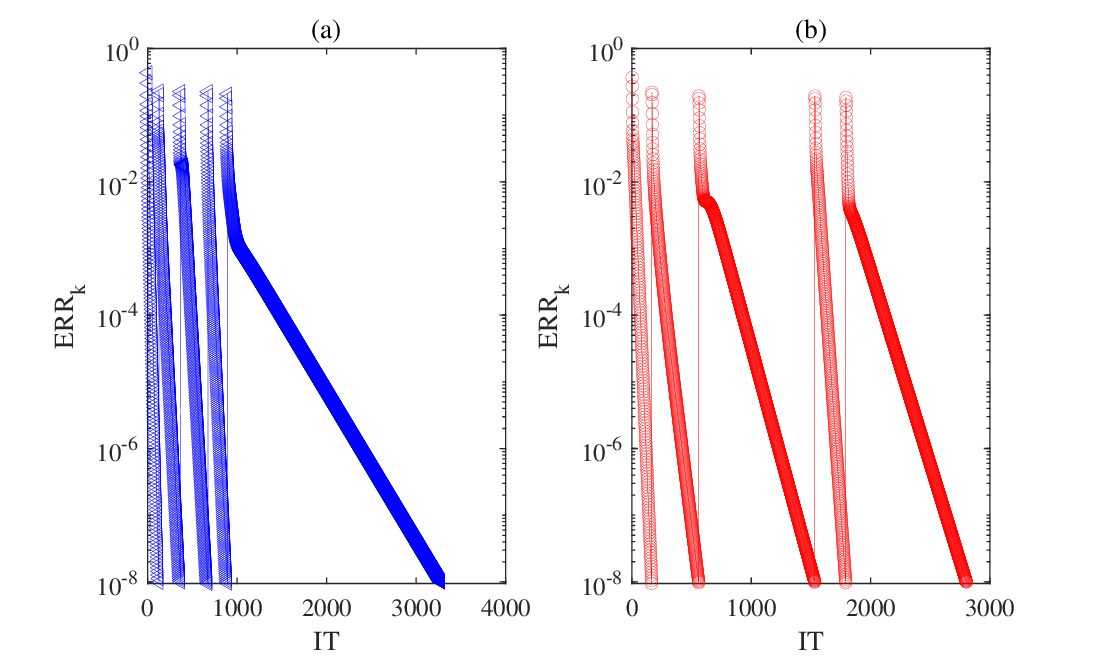}
	\end{tabular}}}
	\caption{(a) and (b): convergence curves of the relative residual norms of \Cref{alg-MpowerSTS} for {\sf plsk1919} and {\sf plskz362}, respectively.}
	\label{fig2}
\end{figure}

\newpage
\begin{example}\label{exa3}
To further show the effectiveness of the proposed method, we report the IT, TIME and MV's computed by \Cref{alg-MpowerSTS} for the matrices $S=(A-A^{T})/{2}$ and $S=[0,A;-A^{T},0]$ with $A$ being the square or rectangular real matrices from the University of Florida Sparse Matrix Collection \cite{Davis2011TheUO}. Table \ref{matrix} shows the properties of the tested matrices $A$, where "nnz($S$)" denotes the number of nonzero entries of $S$ and "description" denotes the problem classification of the corresponding matrix $A$.  \Cref{tab:comp2}--\Cref{tab:comp3} and \Cref{fig4} show the corresponding numerical results. 
\end{example}
\begin{table}[h]
	\caption{Properties of the test matrices $S=(A-A^{T})/2$ (top) and $S=[0,A;-A^{T},0]$ (bottom). 
	\label{matrix}}
	\begin{center}
		\def\arraystretch{1.2}
		\centering\tabcolsep7pt
		\begin{tabular}{lccr}
			\hline
			$A$  &  $n$ & nnz($S$)& description \tabularnewline
			\hline
			epb3  &  84617 &493102& Thermal problem 	\tabularnewline	
			barth &  6691&39496&Duplicate structural problem	\tabularnewline
			utm5940  &  5940&114590&Electromagnetics problem  \tabularnewline
			e40r0100  &  17281& 906340& 2D/3D problem \tabularnewline
			ns3Da  & 20414& 1660392& Fluid dynamics problem\tabularnewline
			%dwa512  &  512& 208& Electromagnetics \tabularnewline
			Zd$_{-}$Jac6 &  22835& 3422730&Chemical process simulation \tabularnewline
			inlet  &  11730& 440562& Model reduction problem \tabularnewline
			memplus& 17758& 42682&Circuit simulation problem\tabularnewline
			power197k& 197156& 924070&Power Network problem\tabularnewline		
			\hline
			$A$  &  size($A$) & nnz($S$)& description  \tabularnewline
			\hline
			m3plates  & $11107\times 11107$&13278& Acoustics problem	\tabularnewline	
			bcsstm35 &  $30237\times 30237$& 41238& Structural problem\tabularnewline
			c-big  &$345241\times 345241$&4681718&Directed weighted graph problem\tabularnewline
			jan99jac080sc  &$27534\times 27534$&302126	&Economic problem \tabularnewline
			stormg2-8  & $4409\times 11322$&57106& Economic problem \tabularnewline
			M80PI$_{-}$n1&  $4028\times4028$& 8066&Model reduction problem\tabularnewline
			%Maragal$_{-}$1 &$32\times 14$&468& Least squares\tabularnewline
			DIMACS10 & $56438\times 56438$&1203204& Random unweighted graph \tabularnewline
			Hardesty2 & $929901\times 	303645$&8041462&Computer graphics problem \tabularnewline
			ML$_{-}$Graph & $10000\times10000 $&291200&Undirected weighted graph  \tabularnewline	
			gupta2 & $62064\times 	62064 $&8496572&Optimization Problem \tabularnewline	
			\hline
		\end{tabular}
	\end{center}
\end{table}

\begin{table}[!h]
	\caption{The IT, TIME, MV's of \Cref{alg-MpowerSTS} to compute the complex conjugate dominant eigenpairs of $S=(A-A^{T})/2$.
		\label{tab:comp2}}
	\begin{center}
		\def\arraystretch{1.2}
		\small
		\centering\tabcolsep7.5pt
		\begin{tabular}{c|ccc|ccc}
			\hline
			\text{Matrix} &
			\multicolumn{3}{c|}{$s=1$} & 	
			\multicolumn{3}{c}{$s=5$}\tabularnewline
			%\cline{4-13}
			&  IT & TIME &MV & IT & TIME & MV\tabularnewline	\hline
			epb3  & 611& 0.6223& 1223&7860& 37.0501& 15725
			\tabularnewline	
			barth & 2251& 0.3187&4503&50446& 16.3731& 100897
			\tabularnewline
			utm5940  &  315& 0.0580& 631& 13109& 5.4587& 26223
			\tabularnewline
			e40r0100  &  171& 0.2386& 343&5610& 10.3717& 11225
			\tabularnewline
			ns3Da  & 562& 1.9836& 1125&9247& 40.5276& 18499
			\tabularnewline
			inlet  &829&0.5291& 1659&33256& 32.0562& 66517
			\tabularnewline
		%	dwa512  &51& 0.0046& 103&1943&0.0461& 3891\tabularnewline
			Zd$_{-}$Jac6 & 2& 0.0171&5&   386& 2.6058& 777
			\tabularnewline
			memplus&  18&0.0032& 37&    40255&43.0547& 80515\tabularnewline
			power197k & 51& 0.3860&103&886& 16.1148& 1777
			\tabularnewline		
			\hline
		\end{tabular}
	\end{center}
\end{table}

\begin{table}[h]
	\caption{The IT, TIME, MV's of \Cref{alg-MpowerSTS} to compute the complex conjugate dominant eigenpairs of $S=[0,A;-A^{T},0]$.
		\label{tab:comp3}}
	\begin{center}
		\def\arraystretch{1.2}
		\small
		\centering\tabcolsep7.5pt
		\begin{tabular}{c|ccc|ccc}
			\hline
			\text{Matrix} &
			\multicolumn{3}{c|}{$s=1$} & 	
			\multicolumn{3}{c}{$s=5$}\tabularnewline
			%\cline{4-13}
	&  IT & TIME &MV &   & TIME &MV\tabularnewline	\hline
	m3plates  &  84& 0.0257& 169&3595& 3.6484& 7195
	\tabularnewline	
	bcsstm35 & 48& 0.0256& 97&3452& 13.8056& 6909
	\tabularnewline
	   c-big  & 1279&23.8567& 2559&1937& 70.6949& 3879
	   \tabularnewline
	jan99jac080sc  &  15& 0.0204& 31&47125& 235.3269& 94255
	 \tabularnewline
	stormg2-8  &191& 0.0370&383& 43191& 25.2662& 86387
	\tabularnewline
	M80PI$_{-}$n1& 26& 0.0036 &53&10887& 2.2411&21779\tabularnewline
	%Maragal$_{-}$1&	  43&  0.0001&87&243& 0.0004& .91 \tabularnewline
	 DIMACS10 & 36& 0.1528& 73&1575& 18.7682& 3155\tabularnewline
	Hardesty2 & 192&5.2684&385&7149& 775.1950& 14303\tabularnewline
	ML$_{-}$Graph & 21&0.0169& 43&3896& 6.5166& 7797
	\tabularnewline		
gupta2& 822&11.9290& 1645&4126&75.9087& 8257
	\tabularnewline		
			\hline
		\end{tabular}
	\end{center}
\end{table}

\begin{table}[h]
	\caption{The IT, TIME, MV's and aveCPU of \Cref{alg-MpowerSTS} when the matrix $A$ is taken as {\sf e40r0100}, {\sf memplus}, {\sf gupta2} and {\sf Hardesty2} shown in \Cref{tab:comp2} and \Cref{tab:comp3} for each fixed $s$, $1\leq s\leq5$.
		\label{tab:test7}}
	\begin{center}
		\def\arraystretch{1.2}
		\small
		\centering\tabcolsep7.5pt
		\begin{tabular}{c|c|ccccc}
			\hline
			\text{Matrix}& &$s=1$ & $s=2$&$s=3$& $s=4$&$s=5$\tabularnewline\hline
			%\cline{4-13}	
			&  IT&171  &  818&  3221  &  766 &   612	\tabularnewline	
			e40r0100 &  TIME&0.2386 & 1.3585  &  5.6798 &   1.4216 &   1.6732 \tabularnewline	
			& MV&343&  1637 &  6443  & 1533&  1225	\tabularnewline	
			& aveTIME&0.0014& 0.0017 & 0.0018 & 0.0019 &  0.0027	\tabularnewline	\hline
			&  IT& 18  &  5  &199  & 20000& 20000	\tabularnewline	
			memplus &  TIME& 0.0032&  0.0020&  0.0930& 10.7614 &  32.1951
			\tabularnewline	
			& MV& 37 & 11  & 399& 40001 & 40001	\tabularnewline	
			& aveTIME& 0.0002 & 0.0004 & 0.0005 & 0.0005& 0.0016	\tabularnewline	\hline
			&  IT& 822  &   1714   &   724  &   400  &  466\tabularnewline	
			gupta2 &  TIME&11.9290 & 29.2450 & 14.3635 & 8.8176 &11.5536 \tabularnewline	
			& MV& 1645  &   3429 &  1449    &  801 &     933\tabularnewline	
			& aveTIME& 0.0145& 0.0171 &0.0198  & 0.0220 & 0.0248\tabularnewline	\hline
			&  IT& 192 &   164  &  2216 & 3351 &  1178	\tabularnewline	
			Hardesty2 &  TIME&  5.4053 & 9.8620& 200.3689& 392.4680 &167.0908 \tabularnewline	
			& MV& 385 & 329 &   4433  &  6703 &2357	\tabularnewline	
			& aveTIME& 0.0282&  0.0601 & 0.0904 & 0.1171 &  0.1418\tabularnewline	\hline		

		\end{tabular}
	\end{center}
\end{table}
Tables~\ref{tab:comp2}-\ref{tab:comp3} show that \Cref{alg-MpowerSTS} succeeds for all the matrices in Table \ref{matrix} when $s=1$ and $s=5$. To explore more features of \Cref{alg-MpowerSTS}, we present \Cref{tab:test7} to show the IT, TIME, MV's and aveTIME for computing each complex conjugate dominant eigenpair with $s$ increasing from $1$ to $5$ when the matrices $A$ are {\sf e40r0100}, {\sf memplus}, {\sf gupta2} and {\sf Hardesty2}. From \Cref{tab:test7}, for a fixed $s, \ 1\leq s\leq5$, one can observe 
that the aveCPU of {\sf e40r0100} is more than that of {\sf memplus}. Details can be seen in \Cref{fig4}(a). This is because the two matrices have about the same order, but the proportion of non-zero elements of {\sf e40r0100} is larger than that of {\sf memplus}. In this case, the time for computing the two matrix-vector products $Sq_{2k}$ and $Sq_{2k+1}$ in step 5 of \Cref{alg-MpowerSTS} for {\sf e40r0100} is longer than that of {\sf memplus}. Meanwhile, as we can see from \Cref{tab:test7} and \Cref{fig4}(b) the aveCPU of {\sf gupta2} is less than that of {\sf Hardesty2}. The reason is that the order of {\sf gupta2} is much less than that of {\sf Hardesty2}, but the numbers of non-zero elements in these two matrices are almost the same. At this time, the costs of two summations of the deflation process in step 5 of \Cref{alg-MpowerSTS} for {\sf Hardesty2} will be more expensive than that of the {\sf gupta2}. So the aveCPU of $S$ for a fixed $s$ increases monotonically with respect to the proportion of non-zero elements. If the number of non-zero elements is fixed, then the aveCPU of a fixed $s$ will increase as the order of $S$ does. 

\begin{figure}[h!]
	\centering\tabcolsep=-10mm
	{\footnotesize{\doublerulesep1pt
			\begin{tabular}
				[c]{c}%
				\includegraphics[height=3.2in]{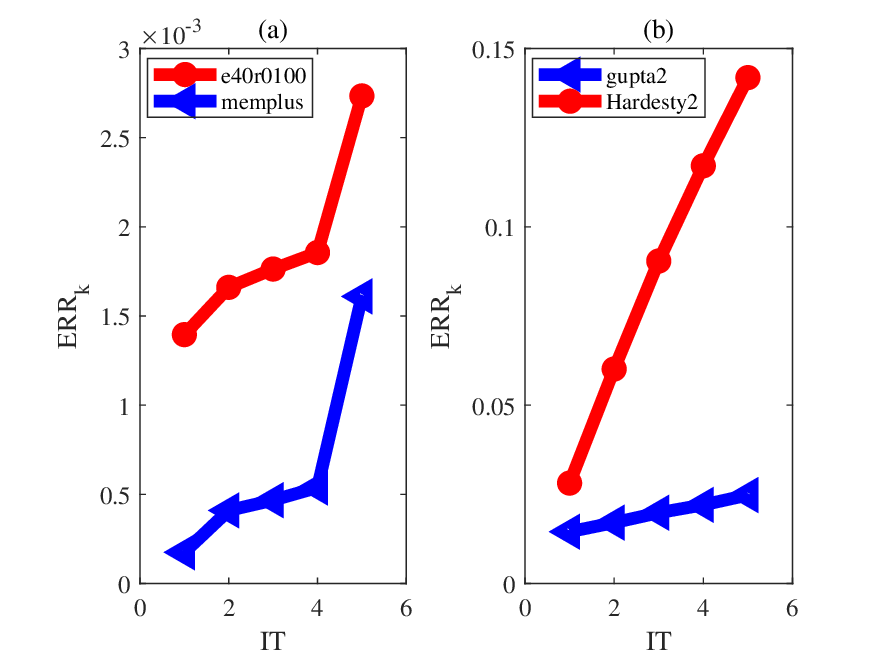}
	\end{tabular}}}
	\caption{The aveCPU for each complex conjugate dominant eigenpair for $s$ from $1$ to $5$.}
	\label{fig4}
\end{figure}
%From Table \ref{tab:comp2and3}, we can see that the number of iterations and the number of matrix-vector products of SSP-STS, SSP-LS and SSP-SVD methods are almost the same, which is in accordance with the result shown in Table \ref{tab:comp1}. Also, the SSP-STS method almost always takes more iteration time than the SSP-LS and SSP-SVD methods, especially for large problems such as {\em gupta2} and {\em Hardesty2}.

\section{Conclusion}\label{con}
In this paper, we have proposed an SSP method to fix the shortcomings of the classical power method that cannot compute the complex conjugate dominant eigenpairs of the real skew-symmetric matrix $S$. We have presented a quantitative convergence result for a general normal matrix. Based on this, we have established the convergence results of the SSP method, i.e., \Cref{alg-powerSTS} and designed a reliable and general-purpose stopping criterion. Finally, we have proposed and developed a deflation technique to compute several complex conjugate dominant eigenpairs of $S$ by deflating the converged $2i, \ i=1,2,\ldots,s-1$ complex conjugate dominant eigenvalues $\lambda_{\pm j}=\pm\mi\sigma_{j}$ and their corresponding eigenvectors $x_{\pm j}, \ j=1,2,\ldots,i$, with $s$ being the desired number of complex conjugate dominant eigenpairs.

Subspace or simultaneous iteration \cite{Golub,Saad,Stewart} is a block generalization of the power method, and can compute several dominant eigenpairs of a given large sparse matrix. It converges faster than 
the power method. The refined subspace iteration presented by Jia \cite{JiaANM} is an improvement on the classical subspace iteration. Therefore, how to extend the SSP method to a block variant for 
the eigenvalue problem of the skew-symmetric $S$ is certainly interesting and deserves attention. This will be our future work.

\section*{Declarations}

The author declares that she read and approved the final manuscript.

%\section*{Acknowledgements}
%We thank the reviewers for their very careful reading of the paper and for their
%valuable comments and suggestions, which help us improve the presentation.

\section*{Acknowledgements}
We thank Professor Zhongxiao Jia of Tsinghua University for his very careful reading of the paper and for his
valuable comments and suggestions, which help us improve and complete the presentation.

\section*{Data Availability}
Inquiries about data availability should be directed to the authors.

\bibliographystyle{siamplain}
\bibliography{references}
\end{document}